\def\l{\left}
\def\r{\right}
\def\bg{\bigg}
\def\({\bg(}
\def\){\bg)}
\def\t{\text}
\def\f{\frac}
\def\sgn{{\rm sgn}}
\def\eq{\equiv}
\def\Z{\mathbb Z}
\def\N{\mathbb N}
\def\1{{\bf 1}}
\theoremstyle{plain}
\newtheorem{theorem}{Theorem}[section]
\newtheorem{lemma}{Lemma}
\newtheorem{conjecture}{Conjecture}
\theoremstyle{definition}
\def\<{\langle}
\def\>{\rangle}
\begin{document}
\hbox{}
\medskip

\title{Two congruences concerning Ap\'{e}ry numbers}
\author{Chen Wang}
\address {(Chen Wang) Department of Mathematics, Nanjing
University, Nanjing 210093, People's Republic of China}
\email{cwang@smail.nju.edu.cn}

\subjclass[2010]{Primary 11B65, 11B68; Secondary 05A10, 11A07}
\keywords{Harmonic numbers, binomial coefficients, congruences, Bernoulli numbers}
\thanks{This work was supported by the National Natural Science Foundation of China (grant no. 11971222)}
 \begin{abstract} Let $n$ be a nonnegative integer. The $n$-th Ap\'{e}ry number is defined by
 $$
 A_n:=\sum_{k=0}^n\binom{n+k}{k}^2\binom{n}{k}^2.
 $$
Z.-W. Sun ever investigated the congruence properties of Ap\'{e}ry numbers and posed some conjectures. For example, Sun conjectured that for any prime $p\geq7$
 $$
 \sum_{k=0}^{p-1}(2k+1)A_k\equiv p-\frac{7}{2}p^2H_{p-1}\pmod{p^6}
 $$
 and for any prime $p\geq5$
 $$
 \sum_{k=0}^{p-1}(2k+1)^3A_k\equiv p^3+4p^4H_{p-1}+\frac{6}{5}p^8B_{p-5}\pmod{p^9},
 $$
 where $H_n=\sum_{k=1}^n1/k$ denotes the $n$-th harmonic number and $B_0,B_1,\ldots$ are the well-known Bernoulli numbers. In this paper we shall confirm these two conjectures.

\end{abstract}
\maketitle

\section{Introduction}
\setcounter{lemma}{0}
\setcounter{theorem}{0}
\setcounter{equation}{0}
\setcounter{conjecture}{0}
\setcounter{proposition}{0}

The well-known Ap\'{e}ry numbers given by
$$
A_n:=\sum_{k=0}^n\binom{n+k}{k}^2\binom{n}{k}^2=\sum_{k=0}^n\binom{n+k}{2k}^2\binom{2k}{k}^2\quad (n\in\N=\{0,1,\ldots\}),
$$
were first introduced by Ap\'{e}ry to prove the irrationality of $\zeta(3)=\sum_{n=1}^{\infty}1/n^3$ (see \cite{Apery,Sl}).

In 2012, Z.-W. Sun introduced the Ap\'{e}ry polynomials
$$
A_n(x)=\sum_{k=0}^n\binom{n+k}{k}^2\binom{n}{k}^2x^k\quad (n\in\N)
$$
and deduced various congruences involving sums of such polynomials. (Clearly, $A_n(1)=A_n$.) For example, for any odd prime $p$ and integer $x$, he obtained that
\begin{equation}\label{sunresult1}
\sum_{k=0}^{p-1}(2k+1)A_k(x)\eq p\l(\f{x}{p}\r)\pmod{p^2},
\end{equation}
where $(-)$ denotes the Legendre symbol.
Letting $x=1$ and for any prime $p\geq5$, Sun established the following generalization of \eqref{sunresult1}:
\begin{equation}\label{sunresult2}
\sum_{k=0}^{p-1}(2k+1)A_k\eq p+\f{7}{6}p^4B_{p-3}\pmod{p^5},
\end{equation}
where $B_0,B_1,\ldots$ are the well-known Bernoulli numbers defined as follows:
 $$
 B_0=0,\sum_{k=0}^{n-1}\binom{n}{k}B_k=0\quad (n=2,3,\ldots).
 $$
In 1850 Kummer (cf. \cite{IR}) proved that for any odd prime $p$ and any even number $b$ with $b\not\eq0\pmod{p-1}$
\begin{equation}\label{kummer1}
\f{B_{k(p-1)+b}}{k(p-1)+b}\eq\f{B_b}{b}\pmod{p}\quad\t{for}\ k\in\N.
\end{equation}

For $m\in\Z^+=\{1,2,\ldots\}$ the $n$-th harmonic numbers of order $m$ are defined by
$$H_n^{(m)}:=\sum_{k=1}^n\frac1{k^m}\quad (n=1,2,\ldots)$$
and $H_0^{(m)}:=0$. For the sake of convenience we often use $H_n$ instead of $H_n^{(1)}$. From \cite{Glaisher} we know that $H_{p-1}\eq-p^2B_{p-3}/3\pmod{p^3}$ for any prime $p\geq5$. Thus \eqref{sunresult2} has the following equivalent form
\begin{equation}\label{sunresult3}
 \sum_{k=0}^{p-1}(2k+1)A_k\equiv p-\frac{7}{2}p^2H_{p-1}\pmod{p^5}.
\end{equation}
Via some numerical computation, Sun \cite[Conjecture 4.2]{Sunapery} conjectured that \eqref{sunresult3} also holds modulo $p^6$ provided that $p\geq7$. This is our first theorem.
\begin{theorem}\label{th1}
For any prime $p\geq7$
\begin{equation}\label{thm1}
\sum_{k=0}^{p-1}(2k+1)A_k\equiv p-\frac{7}{2}p^2H_{p-1}\pmod{p^6}.
\end{equation}
\end{theorem}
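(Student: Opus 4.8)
The plan is to build on the congruence \eqref{sunresult3}, which already gives the result modulo $p^5$; writing $S:=\sum_{k=0}^{p-1}(2k+1)A_k$, the difference $S-p+\tfrac72p^2H_{p-1}$ is thus known to be divisible by $p^5$, so the whole task reduces to pinning down its single remaining $p$-adic digit, i.e.\ to computing $S$ one power of $p$ more precisely than before. As the entry point I would use the representation $A_k=\sum_{j=0}^k\binom{k+j}{2j}^2\binom{2j}{j}^2$ and interchange the order of summation, obtaining
$$
S=\sum_{j=0}^{p-1}\binom{2j}{j}^2\,T_j,\qquad T_j:=\sum_{k=j}^{p-1}(2k+1)\binom{k+j}{2j}^2 .
$$
The inner sum $T_j$ is an incomplete ${}_3F_2$-type sum and (a short Gosper computation shows) has no hypergeometric closed form, so the aim is not to evaluate it but to expand every factor $p$-adically to exactly one order beyond what the proof of \eqref{sunresult3} required.

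The decisive preliminary is to split the outer sum according to the $p$-adic valuation of $\binom{2j}{j}$: for $0\le j\le\tfrac{p-1}2$ it is a $p$-adic unit, whereas for $\tfrac{p+1}2\le j\le p-1$ one has $p\mid\binom{2j}{j}$, so $\binom{2j}{j}^2$ carries an extra factor $p^2$ and those terms need be controlled only modulo $p^4$. On each range I would rewrite the binomials $\binom{k+j}{2j}$ and $\binom{2j}{j}$ in a form involving $p$ — using reflections such as $k\mapsto p-1-k$ — and then expand with the standard formulas
$$
\binom{p-1}{k}\equiv(-1)^k\Big(1-pH_k+\tfrac{p^2}2\big(H_k^2-H_k^{(2)}\big)-\cdots\Big),\qquad \binom{p+k}{k}\equiv1+pH_k+\tfrac{p^2}2\big(H_k^2-H_k^{(2)}\big)+\cdots .
$$
Carrying these expansions to the required order and collecting coefficients turns $S$ into $p$ plus an explicit $\Q$-linear combination of harmonic quantities — things like $H_{p-1}^{(2)}$, $H_{p-1}^{(3)}$ and weighted sums $\sum_k H_k/k$, $\sum_k H_k^{(2)}/k$ — each needed modulo a specific power of $p$.

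The final step is to evaluate those harmonic sums and check that they recombine into exactly $-\tfrac72p^2H_{p-1}$ modulo $p^6$. Here I would invoke (as lemmas proved beforehand) the congruences expressing each such sum in terms of the Bernoulli numbers $B_{p-3}$ and $B_{p-5}$, together with Glaisher's $H_{p-1}\equiv-\tfrac13p^2B_{p-3}\pmod{p^3}$ and its sharpening to modulus $p^4$; the fact that $B_{p-4}=0$ (since $p-4$ is odd) removes one family of potential terms. Rather than reducing the target itself fully to Bernoulli numbers, I would keep $H_{p-1}$ intact and try to match the harmonic combination to it directly, using the pairing relation $2H_{p-1}+pH_{p-1}^{(2)}+p^2H_{p-1}^{(3)}+\cdots=0$ (from grouping $k$ with $p-k$), so that the clean form \eqref{thm1} drops out.

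The step I expect to be the main obstacle is the precision bookkeeping in the middle paragraph: every binomial expansion and every harmonic-sum congruence that sufficed modulo $p^5$ must be pushed one $p$-adic digit further, which simultaneously lengthens the expansions (quadratic and cubic harmonic contributions that were negligible before now matter) and demands sharper inputs — for instance the weighted sums $\sum_k H_k^{(2)}/k$ modulo $p^3$ rather than $p^2$. The genuinely delicate point is that many $B_{p-3}$- and $B_{p-5}$-terms are generated at level $p^5$, and the theorem asserts that they conspire to leave precisely the stated multiple of $p^2H_{p-1}$; verifying this exact cancellation, and confirming that no spurious $p^5$-contribution survives, is the crux on which the whole argument turns.
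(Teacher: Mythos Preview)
Your plan founders on the very first substantive claim: the inner sum $T_j=\sum_{k=j}^{p-1}(2k+1)\binom{k+j}{2j}^2$ \emph{does} have a closed form. In fact
\[
\sum_{m=0}^{n-1}(2m+1)\binom{m+k}{2k}^2=\frac{(n-k)^2}{2k+1}\binom{n+k}{2k}^2,
\]
which is an elementary telescoping identity (it is Lemma~\ref{identity1} above, already used by Sun to prove \eqref{sunresult3}). Gosper's algorithm does find it; the summand $(2m+1)\binom{m+k}{2k}^2$ is Gosper-summable with certificate $(m-k)^2\binom{m+k}{2k}^2/(2k+1)$. With $n=p$ this collapses the double sum to a single sum,
\[
S=\sum_{k=0}^{p-1}\binom{2k}{k}^2\frac{(p-k)^2}{2k+1}\binom{p+k}{2k}^2
 =p^2\sum_{k=0}^{p-1}\frac{1}{2k+1}\binom{p-1}{k}^2\binom{p+k}{k}^2,
\]
and it is only \emph{after} this step that the binomials $\binom{p-1}{k}$ and $\binom{p+k}{k}$ you want to expand actually appear; without the closed form there is no evident way to ``rewrite $\binom{k+j}{2j}$ in a form involving $p$'', since for generic $j,k$ in $[0,p-1]$ that binomial carries no $p$ at all.

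Once you have the single sum, the paper's route diverges from yours in a second important way. Rather than expanding $\binom{p-1}{k}$ and $\binom{p+k}{k}$ separately (which introduces all the odd-order harmonic numbers $H_k,H_k^{(3)},\ldots$ and a large cloud of cross-terms), one observes the exact product
\[
\binom{p-1}{k}\binom{p+k}{k}=\prod_{j=1}^{k}\Bigl(1-\frac{p^2}{j^2}\Bigr),
\]
so that the square expands only in even powers of $p$ and only even-order harmonic data enter: modulo $p^5$ one gets $1-2p^2H_k^{(2)}+p^4\bigl(H_k^{(4)}+4H(2,2;k)\bigr)$. This reduces $S\pmod{p^6}$ to just two sums,
\[
\Sigma_1=\sum_{k=0}^{p-1}\frac{1}{2k+1}\pmod{p^4}\quad\text{and}\quad
\Sigma_2=\sum_{k=0}^{p-1}\frac{H_k^{(2)}}{2k+1}\pmod{p^2}.
\]
The split at $k=(p-1)/2$ is then forced not by the valuation of $\binom{2k}{k}$ (which has disappeared) but by the genuine pole $2k+1=p$; the hard part of the argument is the evaluation of $\Sigma_2\pmod{p^2}$, which needs the alternating multiple-harmonic machinery of Tauraso--Zhao (the quantities $A,B,D,E,F$ and the congruence for $H(2,-1;p-1)$). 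Your outline anticipates none of this specific input, and the ``precision bookkeeping'' you flag as the obstacle is, without the closed form and the product identity above, not merely delicate but unstructured.
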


Motivated by Sun's work on Ap\'{e}ry polynomials, V.J.W. Guo and J. Zeng studied the divisibility of the following sums:
$$
\sum_{k=0}^{n-1}(2k+1)^{2r+1}A_k\quad (n\in\Z^+\ \t{and}\ r\in\N).
$$
Particularly, for $r=1$, they obtained
\begin{equation}
\label{modn3}\sum_{k=0}^{n-1}(2k+1)^3A_k\eq0\pmod{n^3}
\end{equation}
and
\begin{equation}
\label{modp6}\sum_{k=0}^{p-1}(2k+1)^3A_k\eq p^3\pmod{2p^6},
\end{equation}
where $p\geq5$ is a prime. As an extension to \eqref{modp6}, Sun \cite[Conjecture A65]{Sunconj} proposed the following challenging conjecture.
\begin{conjecture}\label{sunconj}For any prime $p\geq5$ we have
\begin{equation*}
\sum_{k=0}^{p-1}(2k+1)^3A_k\eq p^3+4p^4H_{p-1}+\f{6}{5}p^8B_{p-5}\pmod{p^9}.
\end{equation*}
\end{conjecture}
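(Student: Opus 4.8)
The plan is to reduce the congruence to a statement purely about harmonic and Bernoulli numbers by way of an \emph{exact} finite identity, and then to push a $p$-adic expansion far enough (to relative precision $p^6$) to capture the elusive $\f65 p^8B_{p-5}$ term. I would start from the second representation $A_k=\sum_{j=0}^k\binom{k+j}{2j}^2\binom{2j}{j}^2$ quoted in the Introduction. Writing $S_n:=\sum_{k=0}^{n-1}(2k+1)^3A_k$ and interchanging the order of summation gives
\begin{equation*}
S_n=\sum_{j=0}^{n-1}\binom{2j}{j}^2\sum_{k=j}^{n-1}(2k+1)^3\binom{k+j}{2j}^2 .
\end{equation*}
The inner sum is a definite sum of a term that is hypergeometric in $k$, so I would apply creative telescoping (Gosper's/Zeilberger's algorithm) to obtain a closed form; computing a few small cases and verifying the guess by induction on $n$ should express it as $\binom{n+j}{2j+1}^2$ times a low-degree polynomial in $n$ and $j$. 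This collapses $S_n$ into a single sum over $j$ and, as a sanity check, should recover the divisibility \eqref{modn3} and the congruence \eqref{modp6} of Guo and Zeng immediately.

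With $n=p$ the real work begins. Each factor $\binom{p+j}{2j}$ (and its companions coming from the polynomial part) factors as $\binom{2j}{j}$ times a product $\prod(1+p/\ell)$ over a range of $\ell$; taking logarithms turns these into the partial harmonic numbers $H_j^{(m)}$ together with the full harmonic numbers $H_{p-1}^{(m)}$ arising from the indices near $p-1$. Because the prefactor in front of the $j$-sum already carries $p^3$, I must expand every binomial coefficient to relative precision $p^6$, i.e. retain harmonic sums up to order six and all of their nonlinear products. After this expansion $S_p$ takes the form $p^3$ times a $\Z_p$-linear combination of central-binomial harmonic sums $\sum_{j}\binom{2j}{j}^2(\cdots)H_j^{(m)}\cdots$, each to be understood modulo $p^6$.

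The final step is to collapse these $j$-sums to Bernoulli numbers. Here I would invoke the known battery of supercongruences for sums of the shape $\sum_{j=1}^{p-1}\binom{2j}{j}^2/(\cdots)$ weighted by partial harmonic numbers, combined with Wolstenholme/Glaisher evaluations $H_{p-1}^{(m)}\eq(\text{Bernoulli})\pmod{p^{\,\cdots}}$ and Kummer's congruence \eqref{kummer1}. Matching the surviving terms order by order in $p$ should produce $p^3$ at the leading order, reproduce $4p^4H_{p-1}$ across orders $p^4$ through $p^8$ (using the refined $H_{p-1}\eq-\tfrac13p^2B_{p-3}-\tfrac15p^4B_{p-5}\pmod{p^6}$), and leave behind exactly the correction $\f65 p^8B_{p-5}$.

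I expect the main obstacle to be the precision itself. Working modulo $p^9$ forces me to carry harmonic sums up to order six and nonlinear combinations such as $H_{p-1}H_{p-1}^{(2)}$ and $(H_{p-1}^{(2)})^2$, and to reduce each to Bernoulli numbers with no loss mod $p^9$; a single term dropped at order $p^8$ would corrupt the coefficient $6/5$. Establishing the required central-binomial supercongruences to depth $p^6$ — rather than the expansions themselves — is where the bulk of the effort will go, and it is the step most likely to need a delicate auxiliary lemma of its own.
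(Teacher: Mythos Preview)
Your high-level strategy matches the paper's exactly: interchange the two summations, close the inner sum $\sum_{k}(2k+1)^3\binom{k+j}{2j}^2$ in finite form (the paper does this as Lemma~3.4, giving $\frac{(n-j)^2(2n^2-j-1)}{j+1}\binom{n+j}{2j}^2$), specialise $n=p$, and then $p$-adically expand.

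Where your plan diverges is at the expansion step, and this is the one point where you are making the problem much harder than it is. You write that each $\binom{p+j}{2j}$ ``factors as $\binom{2j}{j}$ times a product $\prod(1+p/\ell)$'', so you expect odd-order partial harmonic numbers $H_j^{(m)}$ for $m\le 6$, all their cross-products, and ultimately central-binomial supercongruences of the shape $\sum_j\binom{2j}{j}^2(\cdots)H_j^{(m)}$ to depth $p^6$. But the paper observes that the \emph{pair} of binomial factors collapses: using $(p-j)\binom{2j}{j}\binom{p+j}{2j}=p\binom{p-1}{j}\binom{p+j}{j}$ one gets
\[
\binom{2j}{j}^2(p-j)^2\binom{p+j}{2j}^2=p^2\binom{p-1}{j}^2\binom{p+j}{j}^2=p^2\prod_{\ell=1}^{j}\Bigl(1-\frac{p^2}{\ell^2}\Bigr)^2 .
\]
Two things happen at once: the central binomial $\binom{2j}{j}^2$ disappears, and the product is in $p^2$ rather than $p$, so only even-order objects $H_j^{(2)},H_j^{(4)}$ and the multiple sums $H(2,2;j)$, $H(2,4;j)+H(4,2;j)$, $H(2,2,2;j)$ enter, all to lower precision than you feared. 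The resulting $j$-sums are of the simple type $\sum_j H_j^{(2)}/(j+1)$, $\sum_j H(2,2;j)$, etc., and the paper dispatches them with short lemmas built on Tauraso--Zhao and Zhao's multiple-harmonic evaluations.

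So the obstacle you flag at the end --- establishing central-binomial supercongruences to depth $p^6$ --- is an artefact of not spotting this cancellation; in the paper's route it never arises. Once you incorporate the identity $\binom{p-1}{j}\binom{p+j}{j}=\prod_{\ell\le j}(1-p^2/\ell^2)$, your sketch becomes the paper's proof.
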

This is our second theorem.
\begin{theorem}\label{th2}
Conjecture \ref{sunconj} is true.
\end{theorem}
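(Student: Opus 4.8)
The plan is to reduce the weighted sum to a manageable combination of harmonic numbers and then to evaluate that combination modulo $p^9$ via Bernoulli-number congruences. The natural entry point is the divisibility \eqref{modn3} of Guo and Zeng, which reflects an exact polynomial identity expressing $\sum_{k=0}^{n-1}(2k+1)^3A_k$ as $n^3$ times a finite sum of products of binomial coefficients (a quick check gives $S_1/n^3=1$, $S_2/n^3=17$, $S_3/n^3=7^3$, so the $n^3$ factor is genuine). First I would secure such an identity by applying creative telescoping (the WZ method) to $(2k+1)^3A_k$, using the second representation $A_k=\sum_{j=0}^{k}\binom{k+j}{2j}^2\binom{2j}{j}^2$ to keep the summand hypergeometric; this yields a closed form in which the factor $n^3$ is manifest, already accounting for the leading term $p^3$ once we set $n=p$.

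Next, with $n=p$, I would expand every binomial coefficient in the identity in powers of $p$. Writing $\binom{p+i}{i}=\prod_{j=1}^{i}(1+p/j)$ and $\binom{p-1}{i}=(-1)^i\prod_{j=1}^{i}(1-p/j)$, each factor contributes the elementary symmetric functions of $1/1,\dots,1/i$, which are polynomials in the harmonic numbers $H_i^{(m)}$. Expanding the squares and collecting, the sum becomes a $\Z_{(p)}$-linear combination of terms $p^a\cdot(\text{products of }H^{(m)})$; because of the global factor $p^3$, only finitely many harmonic orders can survive modulo $p^9$, and the problem is reduced to evaluating a bounded list of harmonic sums to high $p$-adic precision.

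The decisive step is then to pin those harmonic sums down. Here I would assemble a battery of congruences for $H_{p-1}^{(m)}$ and for weighted sums such as $\sum_k H_k^{(m)}/k^r$, all expressed through Bernoulli numbers; the basic inputs are the Glaisher-type relation $H_{p-1}\equiv-\frac13p^2B_{p-3}\pmod{p^3}$ quoted above, its higher analogues, and Kummer's congruence \eqref{kummer1}, which replaces Bernoulli numbers of large index by those of small index. The correction $4p^4H_{p-1}$ arises from the first-order ($p^4$) layer, and since $p^2\mid H_{p-1}$ it vanishes modulo $p^6$, consistent with \eqref{modp6}; the delicate term $\frac65p^8B_{p-5}$ comes from the deepest layer, where the order-$5$ harmonic data first feed $B_{p-5}$ into the expansion.

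I expect the main obstacle to lie precisely in this bookkeeping at the $p^8$--$p^9$ level. One must carry a growing collection of harmonic sums and their cross-products to four or five nonzero $p$-adic digits and prove the exact congruences—several of them presumably new—that fix the coefficient $\frac65$ of $p^8B_{p-5}$; an error of even one unit in a single order-$5$ or order-$6$ harmonic congruence would corrupt that coefficient. Establishing and organizing these auxiliary harmonic-number congruences modulo $p^9$, rather than finding the reducing identity, is where the real difficulty will be.
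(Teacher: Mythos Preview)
Your plan is essentially the paper's: a telescoping identity for $\sum_m(2m+1)^3\binom{m+k}{2k}^2$, then a $p$-adic expansion of the resulting binomial products, then evaluation of the surviving harmonic sums via Bernoulli-number congruences. Two points are worth sharpening. First, the identity the paper uses (provable by a one-line induction, no WZ needed) is
\[
\sum_{m=0}^{n-1}(2m+1)^3\binom{m+k}{2k}^2=\frac{(n-k)^2(2n^2-k-1)}{k+1}\binom{n+k}{2k}^2,
\]
which yields $p^2\sum_{k}\frac{2p^2-k-1}{k+1}\binom{p-1}{k}^2\binom{p+k}{k}^2$; the factor $n^3$ is not manifest---the extra $p$ appears only after summing the constant layer---so do not spend effort hunting for an identity with $n^3$ pulled out. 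Second, and more importantly, the two product formulas you wrote should be multiplied \emph{before} expanding: $\binom{p-1}{k}\binom{p+k}{k}=(-1)^k\prod_{j=1}^k(1-p^2/j^2)$, so the square is a polynomial in $p^2$ and only the even-order data $H_k^{(2)},H_k^{(4)}$ and the multiple sums $H(2,2;k)$, $H(2,4;k)+H(4,2;k)$, $H(2,2,2;k)$ ever appear modulo $p^7$. Expanding the two binomials separately, as your outline suggests, would introduce all the odd-order harmonic numbers only to have them cancel; recognising the $p^2$ structure up front is what makes the bookkeeping at the $p^8$--$p^9$ level feasible. With that simplification in hand, the list of auxiliary congruences you need is short and is exactly what the paper supplies in its Lemmas~\ref{1kh22}--\ref{theorem2lemma1}.
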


Proofs of Theorems \ref{th1}--\ref{th2} will be given in Sections 2--3 respectively.
\section{Proof of Theorem \ref{th1}}
\setcounter{lemma}{0}
\setcounter{theorem}{0}
\setcounter{equation}{0}
\setcounter{conjecture}{0}
\setcounter{proposition}{0}
The proofs in this paper strongly depend on the congruence properties of harmonic numbers and the Bernoulli numbers. (The readers may consult \cite{IR,zhber,sunharmonic,sz} for the properties of them.) Below we first list some congruences involving harmonic numbers and the Bernoulli numbers which may be used later.

\begin{lemma}\label{cms}\cite[Remark 3.2]{CMS} For any prime $p\geq5$ we have
$$
2H_{p-1}+pH_{p-1}^{(2)}\eq\f{2}{5}p^4B_{p-5}\pmod{p^5}.
$$
\end{lemma}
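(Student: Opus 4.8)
The plan is to reduce the left-hand side to higher-order harmonic numbers through the symmetry $k\mapsto p-k$, and then to feed in the classical evaluations of $H_{p-1}^{(3)}$ and $H_{p-1}^{(4)}$ in terms of $B_{p-5}$. Throughout I would run the general argument for $p\ge 7$ and dispose of $p=5$ by a direct computation, since the auxiliary evaluations below carry a factor $5$ in their denominators and degenerate there.

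First I would pair $1/k$ with $1/(p-k)$. As each $k$ with $1\le k\le p-1$ is a $p$-adic unit, the expansion
$$
\frac{1}{p-k}=-\frac{1}{k}\sum_{i=0}^{\infty}\Big(\frac{p}{k}\Big)^{i}=-\sum_{i=0}^{\infty}\frac{p^{i}}{k^{i+1}}
$$
converges $p$-adically, so that $\frac1k+\frac1{p-k}=-\sum_{i\ge1}p^{i}/k^{i+1}$ and hence
$$
2H_{p-1}=\sum_{k=1}^{p-1}\Big(\frac1k+\frac1{p-k}\Big)=-\sum_{i=1}^{\infty}p^{i}H_{p-1}^{(i+1)}.
$$
Cancelling the $i=1$ term against the added $pH_{p-1}^{(2)}$ yields the key identity
$$
2H_{p-1}+pH_{p-1}^{(2)}=-p^{2}H_{p-1}^{(3)}-p^{3}H_{p-1}^{(4)}-p^{4}H_{p-1}^{(5)}-\cdots .
$$

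Next I would truncate modulo $p^{5}$. Each $H_{p-1}^{(m)}$ is a $p$-adic integer, so every term $p^{i}H_{p-1}^{(i+1)}$ with $i\ge5$ vanishes; the term $p^{4}H_{p-1}^{(5)}$ also drops, because $H_{p-1}^{(5)}\equiv\sum_{k=1}^{p-1}k^{\,p-6}\equiv0\pmod p$ for $p\ge7$ (a Wolstenholme-type congruence, as $p-1\nmid 5$). Thus
$$
2H_{p-1}+pH_{p-1}^{(2)}\equiv-p^{2}H_{p-1}^{(3)}-p^{3}H_{p-1}^{(4)}\pmod{p^{5}}.
$$
I would then substitute the standard evaluations $H_{p-1}^{(m)}\equiv-\frac{m(m+1)}{2(m+2)}p^{2}B_{p-m-2}\pmod{p^{3}}$ for odd $m$ and $H_{p-1}^{(m)}\equiv\frac{m}{m+1}pB_{p-m-1}\pmod{p^{2}}$ for even $m$ (the former recovering Glaisher's $H_{p-1}\equiv-\frac13p^{2}B_{p-3}$ at $m=1$). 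For $m=3$ and $m=4$ these read $H_{p-1}^{(3)}\equiv-\frac65p^{2}B_{p-5}\pmod{p^{3}}$ and $H_{p-1}^{(4)}\equiv\frac45pB_{p-5}\pmod{p^{2}}$, whence
$$
-p^{2}H_{p-1}^{(3)}-p^{3}H_{p-1}^{(4)}\equiv\frac65p^{4}B_{p-5}-\frac45p^{4}B_{p-5}=\frac25p^{4}B_{p-5}\pmod{p^{5}},
$$
which is the assertion.

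I expect the main obstacle to be the two auxiliary congruences with their precise rational coefficients and signs. If one does not wish to quote them, they follow by writing $H_{p-1}^{(m)}\equiv\sum_{k=1}^{p-1}k^{\,p^{2}(p-1)-m}\pmod{p^{3}}$ via Euler's theorem, expanding the resulting power sum by Faulhaber's formula, and invoking Kummer's congruence \eqref{kummer1} to collapse the surviving Bernoulli terms onto $B_{p-5}$; tracking which terms persist is the delicate bookkeeping. A useful internal check on the signs is that inserting the $m=2$ value $H_{p-1}^{(2)}\equiv\frac23pB_{p-3}$ into the key identity modulo $p^{3}$ reproduces Glaisher's formula exactly. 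Finally, the single prime $p=5$, where $B_{p-5}=B_{0}$, is confirmed by evaluating $2H_{4}+5H_{4}^{(2)}$ directly and checking it is congruent to $\frac25\cdot 5^{4}$ modulo $5^{5}$.
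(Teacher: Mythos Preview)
Your argument is correct. The paper does not give its own proof of this lemma; it simply quotes the result from \cite[Remark~3.2]{CMS}, so there is nothing to compare against beyond noting that your reflection $k\mapsto p-k$ reduces the claim to the evaluations of $H_{p-1}^{(3)}$ and $H_{p-1}^{(4)}$ that the paper itself records in Lemma~\ref{harmonic}.
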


From \cite[Theorems 5.1\&5.2]{zhber}, we have the following congruences.
\begin{lemma}\label{harmonic} For any prime $p\geq7$ we have
\begin{gather*}
H_{(p-1)/2}\eq-2q_p(2)\pmod{p},\\
H_{p-1}^{(2)}\eq\l(\f{4}{3}B_{p-3}-\f{1}{2}B_{2p-4}\r)p+\l(\f{4}{9}B_{p-3}-\f{1}{4}B_{2p-4}\r)p^2\pmod{p^3},\\
H_{(p-1)/2}^{(2)}\eq\l(\f{14}{3}B_{p-3}-\f{7}{4}B_{2p-4}\r)p+\l(\f{14}{9}B_{p-3}-\f{7}{8}B_{2p-4}\r)p^2\pmod{p^3},\\
H_{p-1}^{(3)}\eq-\f{6}{5}p^2B_{p-5}\pmod{p^3},\quad H_{(p-1)/2}^{(3)}\eq6\l(\f{2B_{p-3}}{p-3}-\f{B_{2p-4}}{2p-4}\r)\pmod{p^2},\\
H_{p-1}^{(4)}\eq\f{4}{5}pB_{p-5}\pmod{p^2},\quad H_{(p-1)/2}^{(4)}\eq0\pmod{p},\quad H_{p-1}^{(5)}\eq0\pmod{p^2}.
\end{gather*}
where $q_p(2)$ denotes the Fermat quotient $(2^{p-1}-1)/p$.
\end{lemma}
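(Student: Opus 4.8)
The plan is to reduce every one of these congruences to a computation with ordinary power sums and then to evaluate those power sums via Faulhaber's formula and Kummer's congruence. Since each sum runs over $k$ coprime to $p$, for a target modulus $p^n$ I would first replace each inverse power by a genuine power: because $k^{p^{n-1}(p-1)}\equiv1\pmod{p^n}$ whenever $p\nmid k$, one has
\[
H_{p-1}^{(m)}\equiv\sum_{k=1}^{p-1}k^{\,p^{n-1}(p-1)-m}\pmod{p^n},
\]
and likewise for the half-range sums, with the upper limit $p-1$ replaced by $(p-1)/2$. This converts each harmonic congruence into a question about $\sum_{k=1}^{N}k^{D}$ for a large exponent $D$ with $D\equiv-m\pmod{p-1}$.

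Next I would expand these power sums through the Bernoulli-polynomial identity $\sum_{k=0}^{N-1}k^{D}=\big(B_{D+1}(N)-B_{D+1}\big)/(D+1)$. For the full sums, taking $N=p$ gives $\sum_{k=1}^{p-1}k^{D}=\frac{1}{D+1}\sum_{j}\binom{D+1}{j}B_j\,p^{D+1-j}$; only the few terms with $D+1-j\le n$ survive modulo $p^n$, and since the odd-index Bernoulli numbers vanish, each surviving term is a rational multiple of a power of $p$ times one of $B_{D},B_{D-2},\dots$. For the half sums I would instead expand $B_{D+1}\big(\tfrac{p+1}{2}\big)$ about $x=\tfrac12$, using $B_n(\tfrac12)=(2^{1-n}-1)B_n$; the constant term $(2^{-D}-1)B_{D+1}$ is exactly what introduces the powers of $2$, and in the lowest case $H_{(p-1)/2}\equiv-2q_p(2)\pmod p$ this is precisely the classical Fermat-quotient evaluation of $\sum_{k=1}^{(p-1)/2}1/k$.

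The heart of the argument, and the step I expect to be most delicate, is bringing the huge-index Bernoulli numbers $B_{p^{n-1}(p-1)-m},\,B_{p^{n-1}(p-1)-m-1},\dots$ down to the three that actually appear, namely $B_{p-3}$, $B_{2p-4}$ and $B_{p-5}$. Here I would invoke Kummer's congruence \eqref{kummer1} together with its refinements modulo higher powers of $p$: the surviving even-index Bernoulli number closest to the top has index $\equiv -m$ or $-m-1\pmod{p-1}$, so it reduces to the small-index representative in the same residue class. The simultaneous appearance of $B_{p-3}$ and $B_{2p-4}=B_{(p-3)+(p-1)}$ reflects exactly the need to carry the next term in that residue class in order to reach accuracy modulo $p^2$, while the factor $B_{p-5}$ in the third-order sums comes from the index $\equiv-4\pmod{p-1}$.

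Throughout, the real obstacle is not any single identity but the $p$-adic valuation bookkeeping: one must track the power of $p$ contributed by each Faulhaber term and, via the von Staudt--Clausen theorem, handle the indices divisible by $p-1$ (whose Bernoulli numbers carry a $p$ in the denominator), so that each final congruence emerges with exactly the stated power of $p$ in front. Once this accounting is organized, the remaining manipulations with binomial coefficients and Kummer congruences are routine.
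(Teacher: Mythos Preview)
The paper does not prove this lemma at all: it is quoted directly from Z.-H.~Sun \cite[Theorems 5.1 \& 5.2]{zhber}, with no argument supplied. So there is nothing in the paper to compare your proposal against line by line.

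That said, your outline is essentially the method by which such congruences are established in the cited source: lift each $k^{-m}$ to a positive power via Euler's theorem, apply Faulhaber's formula $\sum_{k<N}k^D=(B_{D+1}(N)-B_{D+1})/(D+1)$, keep only the finitely many terms surviving modulo $p^n$, and then pull the large-index Bernoulli numbers back to $B_{p-3}$, $B_{2p-4}$, $B_{p-5}$ by Kummer-type congruences. The use of $B_n(\tfrac12)=(2^{1-n}-1)B_n$ for the half-range sums and the appearance of $q_p(2)$ in the first congruence are exactly right. So the strategy is sound and matches the literature.

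Where your sketch is thin is precisely where you flag it: the ``refinements modulo higher powers of $p$'' of Kummer's congruence. To reach $H_{p-1}^{(2)}\pmod{p^3}$ or $H_{(p-1)/2}^{(3)}\pmod{p^2}$ you need not just \eqref{kummer1} but the second-order Kummer congruences (equivalently, the expansion of $B_{k(p-1)+b}/(k(p-1)+b)$ to two $p$-adic digits), and those have to be stated and invoked explicitly---this is why both $B_{p-3}$ and $B_{2p-4}$ appear simultaneously. Likewise, for $H_{p-1}^{(3)}\equiv-\tfrac65 p^2 B_{p-5}\pmod{p^3}$ the leading Faulhaber term has index $\equiv-3\pmod{p-1}$, hence vanishes (odd index), and the actual contribution comes from the \emph{next} surviving term; tracking why the coefficient is $-6/5$ rather than something else requires writing that term out. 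None of this is hard, but a referee would want to see at least one of these computations carried through in full rather than left as ``routine.''
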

\noindent {\it Remark 2.1}.
By Kummer's congruence \eqref{kummer1}, we know $B_{2p-4}\eq4B_{p-3}/3\pmod{p}$. Then the congruences of $H_{p-1}^{(2)}$ and $H_{(p-1)/2}^{(2)}$ can be reduced to
$$
H_{p-1}^{(2)}\eq\l(\f{4}{3}B_{p-3}-\f{1}{2}B_{2p-4}\r)p+\f{1}{9}p^2B_{p-3}\pmod{p^3}
$$
and
$$
H_{(p-1)/2}^{(2)}\eq\l(\f{14}{3}B_{p-3}-\f{7}{4}B_{2p-4}\r)p+\f{7}{18}p^2B_{p-3}\pmod{p^3}
$$
respectively.
By Lemma \ref{cms}, we immediately obtain that $H_{p-1}\eq-pH_{p-1}^{(2)}/2\pmod{p^4}$. Thus
\begin{equation}\label{h1mod4}
H_{p-1}\eq\l(\f{1}{4}B_{2p-4}-\f{2}{3}B_{p-3}\r)p^2-\f{1}{18}p^3B_{p-3}\pmod{p^4}.
\end{equation}

Recall that the Bernoulli polynomials $B_n(x)$ are defined as
\begin{equation}\label{berdef1}
B_n(x):=\sum_{k=0}^n\binom{n}{k}B_kx^{n-k}\quad (n\in\N).
\end{equation}
Clearly, $B_n=B_n(0)$. Also, we have
\begin{equation}\label{berdef2}
\sum_{k=1}^{n-1}k^{m-1}=\f{B_m(n)-B_m}{m}
\end{equation}
for any positive integer $n$ and $m$. 

Let $d>0$ and ${\bf s}:=(s_1,\ldots,s_d)\in(\Z\backslash\{0\})^d$. The alternating multiple harmonic sum \cite{TZ} is defined as follows
$$
H({\bf s};n):=\sum_{1\leq k_1<k_2<\cdots k_d\leq n}\prod_{i=1}^d\f{\sgn(s_i)^{k_i}}{k_i^{|s_i|}}.
$$
Clearly, $H_n^{(m)}=H(m;n)$.

Let $A,B,D,E,F$ be defined as in \cite[Section 6]{TZ}, i.e.,
\begin{gather*}
A:=\sum_{k=2}^{p-3}B_kB_{p-3-k},\quad B:=\sum_{k=2}^{p-3}2^kB_kB_{p-3-k},\quad D:=\sum_{k=2}^{p-3}\f{B_kB_{p-3-k}}{k},\\
\quad E:=\sum_{k=2}^{p-3}\f{2^kB_kB_{p-3-k}}{k},\quad F:=\sum_{k=2}^{p-3}\f{2^{p-3-k}B_kB_{p-3-k}}{k}.
\end{gather*}

\begin{lemma}\label{proplemma1}
For any prime $p\geq7$ we have
$$
D-4F\eq 2B-2A-q_p(2)B_{p-3}\pmod{p}.
$$
\end{lemma}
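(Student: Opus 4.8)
The plan is to reduce both sides of the asserted congruence to linear combinations of a single family of Bernoulli convolutions---the plain convolution $\sum_k B_kB_{p-3-k}$ together with its $2^{\pm k}$-weighted and $1/k$-weighted variants---and then to close the remaining gap using the alternating-double-harmonic-sum machinery of \cite{TZ}.

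First I would record two elementary simplifications. Since $B_j=0$ for odd $j\geq3$ and $p-3$ is even, in each of $A,B,D,E,F$ only even indices $k$ contribute, so $k$ and $p-3-k$ are simultaneously even. Next, Fermat's little theorem gives $2^{p-1}\equiv1\pmod p$, hence $4\cdot2^{p-3-k}=2^{p-1-k}\equiv2^{-k}\pmod p$, so that
$$
4F\equiv\sum_{k=2}^{p-3}\f{2^{-k}B_kB_{p-3-k}}{k}\pmod p,
$$
and therefore $D-4F\equiv\sum_{k=2}^{p-3}(1-2^{-k})B_kB_{p-3-k}/k\pmod p$. Using the classical value $B_k(1/2)=(2^{1-k}-1)B_k$ one checks $(1-2^{-k})B_k=\tfrac12(B_k-B_k(1/2))$, which recasts this left-hand side intrinsically through Bernoulli polynomial values at $0$ and $1/2$.

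For the right-hand side I would exploit the reflection $k\mapsto p-3-k$, which sends the index set $\{2,\ldots,p-3\}$ to $\{0,\ldots,p-5\}$; tracking the boundary terms at $k=0$ and $k=p-3$ (where $B_0=1$) and again using $2^{p-3}\equiv1/4\pmod p$ yields
$$
\sum_{k=2}^{p-3}2^{-k}B_kB_{p-3-k}\equiv4B+3B_{p-3}\pmod p,
$$
so that $B$ and its $2^{-k}$-weighted companion differ only by the explicit term $3B_{p-3}$. Combined with $2(B-A)=\sum_k2(2^k-1)B_kB_{p-3-k}$, this reduces the whole target to one relation among the plain convolution $A$, the weighted convolution $B$, and the $1/k$-weighted sums $D$ and $\hat D:=\sum_k 2^{-k}B_kB_{p-3-k}/k\equiv4F$.

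The crux, and the step I expect to be the main obstacle, is this last relation between unweighted and reciprocally weighted convolutions: unlike the binomial convolution $\sum_k\binom{n}{k}B_kB_{n-k}$, the plain convolution has no elementary closed form, so generating functions alone will not suffice. Here I would pass to alternating multiple harmonic sums, matching each of $A,B,D,E,F$ modulo $p$ to a weight-$4$ sum $H(\pm s_1,\pm s_2;p-1)$ with $s_1+s_2=4$ via the expansions of $1/(p-k)^s$ and of power sums through $(B_m(n)-B_m)/m$ in \eqref{berdef2}. Invoking the evaluations of these sums from \cite[Section 6]{TZ} and eliminating the harmonic sums between two such identities isolates $D-4F-2B+2A$; the Fermat quotient term $q_p(2)B_{p-3}$ emerges precisely as the residue of the half-range contributions, through $H_{(p-1)/2}\equiv-2q_p(2)\pmod p$ from Lemma \ref{harmonic}. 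Verifying that the auxiliary $B_{p-5}$- and $B_{2p-4}$-terms all cancel, leaving exactly $-q_p(2)B_{p-3}$, is the delicate bookkeeping that finishes the argument.
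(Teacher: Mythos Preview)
Your proposal lands in the right place---\cite[Section 6]{TZ}---but buries the one-line argument under machinery that is never actually used. The paper's proof is just this: Tauraso and Zhao record in \cite[Section 6]{TZ} both
\[
B-A\equiv 2E-2D+2q_p(2)B_{p-3}\pmod p
\quad\text{and}\quad
\tfrac{5}{2}D-2E-2F-\tfrac{3}{2}q_p(2)B_{p-3}\equiv 0\pmod p,
\]
and eliminating $E$ between these two lines gives $D-4F\equiv 2B-2A-q_p(2)B_{p-3}\pmod p$ immediately. No Bernoulli-polynomial rewriting, no reflection $k\mapsto p-3-k$, no passage back through the alternating harmonic sums is needed.

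Your preliminary observations (that $4F\equiv\sum_k 2^{-k}B_kB_{p-3-k}/k$ and that $\sum_k 2^{-k}B_kB_{p-3-k}\equiv 4B+3B_{p-3}$) are correct, but they are never cashed in: the ``crux'' paragraph abandons them and simply announces that two identities from \cite{TZ} will be combined, without naming which ones or carrying out the elimination. There is also a mild circularity in your plan to ``match each of $A,B,D,E,F$ to a weight-$4$ sum $H(\pm s_1,\pm s_2;p-1)$'' and then invoke \cite{TZ}: in that reference the weight-$4$ alternating sums are themselves evaluated \emph{in terms of} $A,B,D,E,F$, so translating back and forth gains nothing. The lemma is a pure linear-algebra consequence of two already-stated relations; once you see that, the argument is three lines.
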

\begin{proof}
In \cite[Section 6]{TZ}, Tauraso and Zhao proved that
$$
H(1,-3;p-1)\eq B-A\eq 2E-2D+2q_p(2)B_{p-3}\pmod{p}
$$
and
$$
\f{5}{2}D-2E-2F-\f{3}{2}q_p(2)B_{p-3}\eq0\pmod{p}.
$$
Combining the above two congruences we immediately obtain the desired result.
\end{proof}

\begin{lemma}\label{proph31}
Let $p\geq7$ be a prime. Then we have
\begin{equation}
H(3,1;(p-1)/2)\eq H_{(p-1)/2}^{(3)}H_{(p-1)/2}-4B+4A\pmod{p}.
\end{equation}
\end{lemma}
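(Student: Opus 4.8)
The plan is to collapse the double sum $H(3,1;(p-1)/2)$ to a product of single sums plus a weight-four sum via the stuffle (quasi-shuffle) relation, and then to evaluate the surviving piece modulo $p$ in terms of the Bernoulli convolutions $A$ and $B$. Writing $m=(p-1)/2$, the stuffle product of $H_m^{(3)}$ and $H_m=H_m^{(1)}$ reads
$$H_m^{(3)}H_m = H(3,1;m)+H(1,3;m)+H_m^{(4)},$$
so that $H(3,1;m)=H_m^{(3)}H_m-H(1,3;m)-H_m^{(4)}$. By Lemma \ref{harmonic} we have $H_m^{(4)}\equiv0\pmod p$, and hence the asserted congruence is equivalent to the single statement
$$H(1,3;(p-1)/2)\equiv 4B-4A\pmod p.$$
The stuffle step and the vanishing of $H_m^{(4)}$ are routine; all the real content sits in this last congruence.

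To evaluate $H(1,3;(p-1)/2)=\sum_{1\le i<j\le m}1/(ij^3)$ modulo $p$, a first, self-contained route is to pass from reciprocals to power sums using $1/i\equiv i^{p-2}$ and $1/j^3\equiv j^{p-4}\pmod p$ for $1\le i,j\le p-1$. This turns the double sum into $\sum_{j=1}^m j^{p-4}S(j-1)$ with $S(t)=\sum_{i=1}^t i^{p-2}$. I would then replace the inner power sum by a Bernoulli polynomial via \eqref{berdef2}, expand $B_{p-1}(j)$ through \eqref{berdef1}, and collect terms. Because the outer index runs only up to $m=(p-1)/2$, the even/odd (equivalently $k\mapsto 2k$) bookkeeping forces factors $2^k$ to appear alongside products $B_kB_{p-3-k}$; after reduction modulo $p$ with the help of Kummer's congruence \eqref{kummer1} these are expected to organize into $A$ and $B$, together with stray contributions involving $D$, $E$, $F$ and $q_p(2)B_{p-3}$ that must then be eliminated. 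This is exactly the point where the preceding Lemma \ref{proplemma1} (and the TZ relations quoted in its proof) should earn its keep, by cancelling the $D,E,F$ and Fermat-quotient terms and leaving precisely $4B-4A$.

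A second, cleaner route avoids re-deriving these Bernoulli expansions from scratch and instead expresses the half-range sum through full-range \emph{alternating} weight-four sums, since the even/odd split yields identities of the shape $2^{1-s}H_m^{(s)}=H_{p-1}^{(s)}+\sum_{k=1}^{p-1}(-1)^k/k^s$ and their two-variable analogues. This is attractive here because Tauraso and Zhao's evaluation gives $H(1,-3;p-1)\equiv B-A\pmod p$, which is exactly one quarter of the target $4(B-A)$; this strongly suggests proving $H(1,3;(p-1)/2)\equiv 4\,H(1,-3;p-1)\pmod p$ (up to terms handled by Lemma \ref{proplemma1}) via such a decomposition, with all companion sums $H(-1,3;p-1)$, $H(3,1;p-1)$, etc.\ supplied by \cite{TZ} in terms of $A,B,D,E,F$.

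Either way the main obstacle is identical: the faithful passage from the half-range weight-four sum to the Bernoulli convolutions, keeping exact track of the $2^k$ weights and of every lower-order term that must cancel modulo $p$. The hard part will be verifying that after this reduction the combination is precisely $4(B-A)$ with no residual $D$, $E$, $F$ or $q_p(2)B_{p-3}$ surviving; this is what makes Lemma \ref{proplemma1} a necessary preliminary rather than a decoration.
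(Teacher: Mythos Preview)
Your first route is precisely the paper's proof: after the stuffle reduction and $H_m^{(4)}\equiv 0\pmod p$, the paper replaces $1/j$ by $j^{p-2}$, writes the inner sum as $(B_{p-1}(k)-B_{p-1})/(p-1)$, expands via \eqref{berdef1}, and uses $B_{i-2}(1/2)=(2^{3-i}-1)B_{i-2}$ to obtain $H(1,3;(p-1)/2)\equiv -B_{p-3}H_{(p-1)/2}+2D-8F\pmod p$ (no $E$ term appears, and Kummer is not invoked at this step); then $H_{(p-1)/2}\equiv -2q_p(2)$ from Lemma~\ref{harmonic} together with Lemma~\ref{proplemma1} collapses this to $4B-4A$ exactly as you anticipated. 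Your second route through $H(1,-3;p-1)$ is a plausible alternative, but it is not the path the paper takes.
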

\begin{proof} By Lemma \ref{harmonic}, it is easy to check that
\begin{equation}\label{h13h31}
\begin{aligned}
H_{(p-1)/2}^{(3)}H_{(p-1)/2}=&H(1,3;(p-1)/2)+H(3,1;(p-1)/2)+H_{(p-1)/2}^{(4)}\\
\eq& H(1,3;(p-1)/2)+H(3,1;(p-1)/2)\pmod{p}.
\end{aligned}
\end{equation}
Thus it suffices to evaluate $H(1,3;(p-1)/2)$ modulo $p$. By Fermat's little theorem, \eqref{berdef1} and \eqref{berdef2} we arrive at
\begin{align*}
H(1,3;(p-1)/2)=&\sum_{1\leq j<k\leq (p-1)/2}\f{1}{jk^3}\eq\sum_{1\leq j<k\leq (p-1)/2}\f{j^{p-2}}{k^3}=\sum_{1\leq k\leq (p-1)/2}\f{B_{p-1}(k)-B_{p-1}}{k^3(p-1)}\\
=&\sum_{1\leq k\leq (p-1)/2}\f{\sum_{i=1}^{p-1}\binom{p-1}{i}k^{i-3}B_{p-1-i}}{p-1}=\sum_{i=1}^{p-1}\f{\binom{p-1}{i}B_{p-1-i}}{p-1}\sum_{k=1}^{(p-1)/2}k^{i-3}\\
\eq&\f{\binom{p-1}{2}B_{p-3}}{p-1}H_{(p-1)/2}+\sum_{i=4}^{p-1}\f{\binom{p-1}{i}B_{p-1-i}}{p-1}\cdot\f{B_{i-2}\l(\f{1}{2}\r)-B_{i-2}}{i-2}\pmod{p},
\end{align*}
where the last step follows from the fact $B_n=0$ for any odd $n\geq3$. By \cite{IR} we know that $B_n(1/2)=(2^{1-n}-1)B_n$. Thus
\begin{align*}
H(1,3;(p-1)/2)\eq&-B_{p-3}H_{(p-1)/2}-\sum_{i=4}^{p-1}\f{(2^{3-i}-2)B_{p-1-i}B_{i-2}}{i-2}\\
=&-B_{p-3}H_{(p-1)/2}-\sum_{i=2}^{p-3}\f{(2^{1-i}-2)B_{p-3-i}B_{i}}{i}\\
\eq&-B_{p-3}H_{(p-1)/2}-8F+2D\pmod{p}.
\end{align*}
With helps of Lemmas \ref{harmonic} and \ref{proplemma1}, we have
$$
H(1,3;(p-1)/2)\eq4B-4A\pmod{p}.
$$
Combining this with \eqref{h13h31}, we have completed the proof of Lemma \ref{proph31}.
\end{proof}

\begin{lemma}\label{h21}Let $p\geq7$ be a prime. Then we have
$$
\sum_{k=1}^{(p-1)/2}\f{H_k^{(2)}}{k}\eq\f{3}{2p^2}H_{p-1}+\f{1}{2}H_{(p-1)/2}^{(3)}+\f{1}{2}H_{p-1}^{(2)}H_{(p-1)/2}-pH_{(p-1)/2}^{(3)}H_{(p-1)/2}+4p(B-A)\pmod{p}.
$$
\end{lemma}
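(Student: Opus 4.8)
The plan is to follow the template of Lemma~\ref{proph31}: collapse the sum to a single double harmonic sum, reduce it by the quasi-shuffle (stuffle) relation to $H(1,2;(p-1)/2)$, and then evaluate that sum by the Fermat--Bernoulli method. Writing $H_k^{(2)}=\sum_{j=1}^k j^{-2}$ and splitting off the diagonal term $j=k$ gives the exact identity
\[
\sum_{k=1}^{(p-1)/2}\frac{H_k^{(2)}}{k}=H(2,1;(p-1)/2)+H_{(p-1)/2}^{(3)}.
\]
I would then invoke the stuffle relation $H_{(p-1)/2}H_{(p-1)/2}^{(2)}=H(1,2;(p-1)/2)+H(2,1;(p-1)/2)+H_{(p-1)/2}^{(3)}$ to eliminate $H(2,1;(p-1)/2)$, obtaining the clean exact formula
\[
\sum_{k=1}^{(p-1)/2}\frac{H_k^{(2)}}{k}=H_{(p-1)/2}H_{(p-1)/2}^{(2)}-H(1,2;(p-1)/2).
\]
Thus everything reduces to evaluating $H(1,2;(p-1)/2)$ modulo $p$ and rewriting the half-harmonic product $H_{(p-1)/2}H_{(p-1)/2}^{(2)}$ in terms of $H_{p-1}^{(2)}$.

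For the first task I would mimic Lemma~\ref{proph31} almost verbatim. By Fermat's little theorem $1/j\equiv j^{p-2}$, and \eqref{berdef2} turns the inner sum into $\sum_{j=1}^{k-1}j^{p-2}=(B_{p-1}(k)-B_{p-1})/(p-1)$; expanding by \eqref{berdef1}, interchanging summations, and evaluating the power sums $\sum_{k=1}^{(p-1)/2}k^{i-2}$ once more through \eqref{berdef2} with $B_m((p+1)/2)\equiv B_m(1/2)=(2^{1-m}-1)B_m\pmod p$ reduces $H(1,2;(p-1)/2)$ to a boundary term (from $i=2$) plus a convolution of Bernoulli numbers whose two indices sum to $p-2$. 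Here a simplification occurs that is absent in Lemma~\ref{proph31}: since $p-2$ is odd, one factor in every product carries an odd index, so the convolution collapses to its single surviving term. I expect this to give $H(1,2;(p-1)/2)\equiv\frac32 B_{p-3}\pmod p$; applying Glaisher's congruence $H_{p-1}\equiv-p^2B_{p-3}/3\pmod{p^3}$ to the boundary part and $H_{(p-1)/2}^{(3)}\equiv-2B_{p-3}\pmod p$ (from Lemma~\ref{harmonic} together with Remark~2.1) to the remaining part repackages this as $H(1,2;(p-1)/2)\equiv-\frac{3}{2p^2}H_{p-1}-\frac12 H_{(p-1)/2}^{(3)}\pmod p$.

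For the product term I would reflect the upper half of $H_{p-1}^{(2)}$ through $k\mapsto p-k$: from $1/(p-k)^2\equiv 1/k^2+2p/k^3\pmod{p^2}$ one gets $H_{p-1}^{(2)}\equiv 2H_{(p-1)/2}^{(2)}+2pH_{(p-1)/2}^{(3)}\pmod{p^2}$, hence $H_{(p-1)/2}^{(2)}\equiv\frac12 H_{p-1}^{(2)}-pH_{(p-1)/2}^{(3)}\pmod{p^2}$. Multiplying by the $p$-integral number $H_{(p-1)/2}$ converts the product into $\frac12 H_{p-1}^{(2)}H_{(p-1)/2}-pH_{(p-1)/2}^{(3)}H_{(p-1)/2}$. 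Substituting the two evaluations into the exact formula of the first paragraph and collecting terms then yields the claimed right-hand side.

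The difficulty here is bookkeeping rather than ideas. Because $H_{p-1}^{(2)}$, $H_{p-1}$ and the sums $A,B$ are all $p$-integral, the three terms $\frac12 H_{p-1}^{(2)}H_{(p-1)/2}$, $pH_{(p-1)/2}^{(3)}H_{(p-1)/2}$ and $4p(B-A)$ are each $\equiv0\pmod p$, so modulo $p$ the identity truly collapses to $\sum_{k=1}^{(p-1)/2}H_k^{(2)}/k\equiv-\frac32 B_{p-3}$; these extra pieces are carried only because this not-yet-reduced shape is what must be fed into the mod $p^6$ argument. The real effort therefore goes into evaluating $H(1,2;(p-1)/2)$ correctly and recognizing the collapse of its convolution; the residual $O(p)$ contributions (in particular the piece recorded as $4p(B-A)$, which one would identify by carrying the evaluation of $H(2,1;(p-1)/2)$ at level mod $p^2$, where Lemma~\ref{proplemma1} genuinely enters) vanish modulo $p$ but must be tracked to land exactly on the stated form. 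This accounting is the only genuinely delicate step.
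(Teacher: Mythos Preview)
Your argument is correct for the lemma exactly as stated (modulo $p$), and it is genuinely different from the paper's proof. The paper does not use the stuffle identity at all. Instead it starts from the external congruence $\sum_{k=1}^{p-1}H_k^{(2)}/k\equiv 3H_{p-1}/p^2\pmod{p^2}$ of \cite{LW}, reflects the upper half of that full sum via $k\mapsto p-k$ (expanding $H_{p-k}^{(2)}$ and $1/(p-k)$ to second order), and thereby expresses the full sum as twice the half-sum plus correction terms involving $H(2,2;(p-1)/2)$ and $H(3,1;(p-1)/2)$; the former vanishes mod $p$ and the latter is supplied by Lemma~\ref{proph31}. Solving for the half-sum gives the claimed formula.

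Your route---collapse to $H_{(p-1)/2}H_{(p-1)/2}^{(2)}-H(1,2;(p-1)/2)$ by stuffle, evaluate $H(1,2;(p-1)/2)\equiv\tfrac32 B_{p-3}\pmod p$ by the Bernoulli-convolution trick (the collapse you noticed, coming from the odd total weight $p-2$, is real), and rewrite $H_{(p-1)/2}^{(2)}$ via the reflection $H_{p-1}^{(2)}\equiv 2H_{(p-1)/2}^{(2)}+2pH_{(p-1)/2}^{(3)}$---is more self-contained: it needs neither the \cite{LW} input nor Lemma~\ref{proph31}. The trade-off is precision. The paper's computation is actually carried out modulo $p^2$ throughout (the displayed $\pmod p$ is almost certainly a typo for $\pmod{p^2}$), and that strength is what is used when the lemma is fed into the evaluation of $\Sigma_2$ in the proof of Theorem~\ref{th1}: there the $4p(B-A)$ from this lemma cancels against the $p(B-A)$ coming from $H(2,-1;p-1)$ in \eqref{h2-1}. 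Your approach lands on the stated right-hand side minus $4p(B-A)$, which is fine mod $p$ but does not by itself supply the $p^2$-level information needed downstream; the ``tracking'' you allude to at the end would require evaluating $H(1,2;(p-1)/2)$ (or equivalently $H(2,1;(p-1)/2)$) modulo $p^2$, and that is where the Tauraso--Zhao constants $A,B$ would re-enter your argument in a way you have not yet made explicit.
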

\begin{proof}
By \cite[Eq. (3.13)]{LW} we know that for any odd prime $p$
\begin{equation}\label{key'}
\sum_{k=1}^{p-1}\f{H_k^{(2)}}{k}\eq\f{3}{p^2}H_{p-1}\pmod{p^2}.
\end{equation}
On the other hand,
\begin{align*}
\sum_{k=1}^{p-1}\f{H_k^{(2)}}{k}=&\sum_{k=1}^{(p-1)/2}\f{H_k^{(2)}}{k}+\sum_{k=1}^{(p-1)/2}\f{H_{p-k}^{(2)}}{p-k}.
\end{align*}
For $k=1,2,\ldots,(p-1)/2$ we have
\begin{align*}
H_{p-k}^{(2)}=&\sum_{j=k}^{p-1}\f{1}{(p-j)^2}\eq\sum_{j=k}^{p-1}\l(\f{1}{j^2}+\f{2p}{j^2}\r)\eq H_{p-1}^{(2)}-H_{k-1}^{(2)}-2pH_{k-1}^{(3)}\pmod{p^2}
\end{align*}
by Lemma \ref{harmonic}. Thus
\begin{align*}
\sum_{k=1}^{p-1}\f{H_k^{(2)}}{k}\eq&2\sum_{k=1}^{(p-1)/2}\f{H_k^{(2)}}{k}-H_{(p-1)/2}^{(3)}+p\sum_{k=1}^{(p-1)/2}H(2,2;(p-1)/2)\\
&-H_{p-1}^{(2)}H_{(p-1)/2}+2pH(3,1;(p-1)/2)\pmod{p^2}.
\end{align*}
In view of Lemma \ref{harmonic}, we have
$$H(2,2;(p-1)/2)=\f{\l(H_{(p-1)/2}^{(2)}\r)^2}{2}-\f{H_{(p-1)/2}^{(4)}}{2}\eq0\pmod{p}.$$
This together with Lemma \ref{proph31} proves Lemma \ref{h21}. 
\end{proof}

\begin{lemma}\cite[Lemma 2.1]{Sunapery}\label{identity1} Let $k\in\N$. Then for $n\in\Z^+$ we have
$$
\sum_{m=0}^{n-1}(2m+1)\binom{m+k}{2k}^2=\f{(n-k)^2}{2k+1}\binom{n+k}{2k}^2.
$$
\end{lemma}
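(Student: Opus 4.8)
The plan is to prove the identity by exhibiting the right-hand side as an antidifference of the summand, so that the sum telescopes. Write $R(n):=\f{(n-k)^2}{2k+1}\binom{n+k}{2k}^2$ for the claimed closed form. The goal then reduces to checking two facts: first, that $R(0)=0$, and second, that $R(n+1)-R(n)=(2n+1)\binom{n+k}{2k}^2$ for every $n\in\N$. Granting these, summing over $m=0,1,\ldots,n-1$ gives $\sum_{m=0}^{n-1}(2m+1)\binom{m+k}{2k}^2=\sum_{m=0}^{n-1}\big(R(m+1)-R(m)\big)=R(n)-R(0)=R(n)$, which is exactly the assertion.

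The initial value $R(0)=\f{k^2}{2k+1}\binom{k}{2k}^2$ vanishes in all cases: for $k=0$ because of the factor $k^2$, and for $k\geq1$ because $\binom{k}{2k}=0$. For the telescoping step I would factor out $\binom{n+k}{2k}^2$ using the ratio
$$\binom{n+1+k}{2k}=\f{n+1+k}{n+1-k}\binom{n+k}{2k},$$
which follows at once from the factorial expressions. This turns $R(n+1)-R(n)$ into
$$\f{\binom{n+k}{2k}^2}{2k+1}\Big[(n+1+k)^2-(n-k)^2\Big],$$
and the bracket factors as $(2n+1)(2k+1)$, leaving precisely $(2n+1)\binom{n+k}{2k}^2$.

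The one place that needs care, and the only genuine obstacle, is the division by $n+1-k$ in the ratio above: it is illegitimate exactly when $n+1=k$ (possible only for $k\geq1$). In that single case I would instead verify $R(n+1)-R(n)=(2n+1)\binom{n+k}{2k}^2$ by direct inspection---both $R(k)=R(k-1)=0$ (the former from the factor $(k-k)^2$, the latter from $\binom{2k-1}{2k}=0$) and the right-hand side $(2k-1)\binom{2k-1}{2k}^2=0$ all vanish---so the identity survives. Equivalently, one may phrase the whole argument as induction on $n$, where the inductive step rests on the same algebraic identity $(n-k)^2+(2n+1)(2k+1)=(n+1+k)^2$; either framing is routine once this degenerate index is dispatched.
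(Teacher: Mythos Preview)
Your telescoping argument is correct and complete; the degenerate case $n+1=k$ is handled properly. Note that the paper itself does not prove this lemma but simply quotes it from \cite[Lemma~2.1]{Sunapery}; however, your approach is exactly the ``induction on $n$'' that the paper uses for the companion identity in Lemma~\ref{identity2}, so there is nothing to contrast.
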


\noindent {\it Proof of Theorem \ref{th1}}. By Lemma \ref{identity1} it is routine to check that
\begin{align*}
\sum_{m=0}^{p-1}(2m+1)A_m=&\sum_{m=0}^{p-1}(2m+1)\sum_{k=0}^m\binom{m+k}{2k}^2\binom{2k}{k}^2=\sum_{k=0}^{p-1}\binom{2k}{k}^2\sum_{m=0}^{p-1}(2m+1)\binom{m+k}{2k}^2\\
=&\sum_{k=0}^{p-1}\binom{2k}{k}^2\f{(p-k)^2}{2k+1}\binom{p+k}{2k}^2=p^2\sum_{k=0}^{p-1}\f{1}{2k+1}\binom{p-1}{k}^2\binom{p+k}{k}^2.
\end{align*}
Note that
\begin{align*}
\binom{p-1}{k}^2\binom{p+k}{k}^2=&\prod_{j=1}^k\l(1-\f{p^2}{j^2}\r)^2\eq\prod_{j=1}^k\l(1-\f{2p^2}{j^2}+\f{p^4}{j^4}\r)\\
\eq&1-2p^2H_k^{(2)}+p^4H_k^{(4)}+4p^4H(2,2;k)\pmod{p^5}.
\end{align*}
Since $H_{(p-1)/2}^{(4)}\eq0\pmod{p}$ and $H(2,2;(p-1)/2)\eq0\pmod{p}$, we have
\begin{align}\label{key}
\sum_{m=0}^{p-1}(2m+1)A_m\eq p^2\Sigma_1-2p^4\Sigma_2\pmod{p^6},
\end{align}
where
$$
\Sigma_1:=\sum_{k=0}^{p-1}\f{1}{2k+1}\quad \t{and}\quad \Sigma_2:=\sum_{k=0}^{p-1}\f{H_k^{(2)}}{2k+1}.
$$

We first consider $\Sigma_1$ modulo $p^4$. Clearly,
\begin{align*}
\sum_{k=(p+1)/2}^{p-1}\f{1}{2k+1}=&\sum_{k=0}^{(p-3)/2}\f{1}{2(p-1-k)+1}\\
\eq&-8p^3\sum_{k=0}^{(p-3)/2}\f{1}{(2k+1)^4}-2p\sum_{k=0}^{(p-3)/2}\f{1}{(2k+1)^2}\\
&-4p^2\sum_{k=0}^{(p-3)/2}\f{1}{(2k+1)^3}-\sum_{k=0}^{(p-3)/2}\f{1}{2k+1}\pmod{p^4}.
\end{align*}
For $r\in\{2,3,4\}$,
$$
\sum_{k=0}^{(p-3)/2}\f{1}{(2k+1)^r}=H_{p-1}^{(r)}-\f{1}{2^r}H_{(p-1)/2}^{(r)}.
$$
By the above and in view of Lemma \ref{harmonic},
\begin{equation}\label{Sigma1}
\begin{aligned}
\Sigma_1=&\f{1}{p}+\sum_{k=0}^{(p-3)/2}\f{1}{2k+1}+\sum_{k=(p+1)/2}^{p-1}\f{1}{2k+1}\\
\eq&\f{1}{p}-2p\l(H_{p-1}^{(2)}-\f{1}{4}H_{(p-1)/2}^{(2)}\r)+\f{1}{2}p^2H_{(p-1)/2}^{(3)}\pmod{p^4}.
\end{aligned}
\end{equation}

Now we turn to $\Sigma_2$ modulo $p^2$. By Lemma \ref{harmonic},
\begin{align*}
&\sum_{k=(p+1)/2}^{p-1}\f{H_k^{(2)}}{2k+1}=\sum_{k=0}^{(p-3)/2}\f{H_{p-1-k}^{(2)}}{2(p-1-k)+1}\\
\eq&\sum_{k=0}^{(p-3)/2}\f{H_k^{(2)}}{2k+1}+2p\sum_{k=0}^{(p-3)/2}\f{H_k^{(2)}}{(2k+1)^2}+\f{1}{2}H_{p-1}^{(2)}H_{(p-1)/2}+2p\sum_{k=0}^{(p-3)/2}\f{H_k^{(3)}}{2k+1}\pmod{p^2}.
\end{align*}
Thus
$$
\Sigma_2\eq \f{H_{(p-1)/2}^{(2)}}{p}+2\sigma_1+\f{1}{2}H_{p-1}^{(2)}H_{(p-1)/2}+2p\sigma_2\pmod{p^2},
$$
where
$$
\sigma_1:=\sum_{k=0}^{(p-3)/2}\f{H_k^{(2)}}{2k+1}+p\sum_{k=0}^{(p-3)/2}\f{H_k^{(2)}}{(2k+1)^2}
$$
and
$$
\sigma_2:=\sum_{k=0}^{(p-3)/2}\f{H_k^{(3)}}{2k+1}.
$$
It is easy to see that
\begin{align*}
\sigma_1\eq&-\sum_{k=0}^{(p-3)/2}\f{H_k^{(2)}}{p-1-2k}=-\sum_{k=1}^{(p-1)/2}\f{H_{(p-1)/2-k}^{(2)}}{2k}\\
\eq&-\f{1}{2}H_{(p-1)/2}H_{(p-1)/2}^{(2)}+\f{1}{2}\sum_{k=1}^{(p-1)/2}\f{1}{k}\sum_{k=0}^{k-1}\l(\f{4}{(2j+1)^2}+\f{8p}{(2j+1)^3}\r)\\
=&-\f{1}{2}H_{(p-1)/2}H_{(p-1)/2}^{(2)}+\f{1}{2}\sum_{k=1}^{(p-1)/2}\f{1}{k}\l(4H_{2k}^{(2)}-H_k^{(2)}+8pH_{2k}^{(3)}-pH_k^{(3)}\r)\pmod{p^2}.
\end{align*}
Also,
\begin{align*}
\sigma_2\eq&-\sum_{k=0}^{(p-3)/2}\f{H_k^{(3)}}{p-1-2k}=-\sum_{k=1}^{(p-1)/2}\f{H_{(p-1)/2-k}^{(3)}}{2k}\\
\eq&-\f{1}{2}H_{(p-1)/2}H_{(p-1)/2}^{(3)}+\f{1}{2}\sum_{k=1}^{(p-1)/2}\f{1}{k}\sum_{j=0}^{k-1}\f{-8}{(2j+1)^3}\\
=&-\f{1}{2}H_{(p-1)/2}H_{(p-1)/2}^{(3)}-4\sum_{k=1}^{(p-1)/2}\f{1}{k}\l(H_{2k}^{(3)}-\f{1}{8}H_k^{(3)}\r)\pmod{p}.
\end{align*}
Combining the above we deduce that
\begin{equation}\label{Sigma2-1}
\begin{aligned}
\Sigma_2\eq&\f{H_{(p-1)/2}^{(2)}}{p}-H_{(p-1)/2}H_{(p-1)/2}^{(2)}+4\sum_{k=1}^{(p-1)/2}\f{H_{2k}^{(2)}}{k}-\sum_{k=1}^{(p-1)/2}\f{H_{k}^{(2)}}{k}\\
&+\f{1}{2}H_{p-1}^{(2)}H_{(p-1)/2}-pH_{(p-1)/2}H_{(p-1)/2}^{(3)}\pmod{p^2}.
\end{aligned}
\end{equation}
Note that
\begin{equation}\label{H2kk}
\sum_{k=1}^{(p-1)/2}\f{H_{2k}^{(2)}}{k}=\sum_{k=1}^{p-1}\f{H_k^{(2)}}{k}+H(2,-1;p-1)+\f{1}{4}H_{(p-1)/2}^{(3)}-H_{p-1}^{(3)}.
\end{equation}
By \cite[Proposition 7.3]{TZ} we know that
\begin{equation}\label{h2-1}
H(2,-1;p-1)\eq-\f{3}{2}X-\f{7}{6}pq_p(2)B_{p-3}+p(B-A)\pmod{p^2},
\end{equation}
where $X:=B_{p-3}/(p-3)-B_{2p-4}/(4p-8)$. Now combining \eqref{Sigma2-1}--\eqref{h2-1}, Lemmas \ref{harmonic} and \ref{h21} we obtain that
\begin{equation}\label{Sigma2}
\begin{aligned}
\Sigma_2\eq&\f{H_{(p-1)/2}^{(2)}}{p}+\f{21H_{p-1}}{2p^2}\pmod{p^2}.
\end{aligned}
\end{equation}
Substituting \eqref{Sigma1} and \eqref{Sigma2} into \eqref{key} and in light of \eqref{key'} and Lemma \ref{harmonic} we have
\begin{align*}
\sum_{m=0}^{p-1}(2m+1)A_m\eq& p-2p^3H_{p-1}^{(2)}-\f{3}{2}p^3H_{(p-1)/2}^{(2)}+\f{1}{2}p^4H_{(p-1)/2}^{(3)}-21p^2H_{p-1}\\
\eq&p-\f{7}{2}p^2H_{p-1}\pmod{p^6}.
\end{align*}
The proof of Theorem \ref{th1} is complete now.\qed

\section{Proof of Theorem \ref{th2}}
\setcounter{lemma}{0}
\setcounter{theorem}{0}
\setcounter{equation}{0}
\setcounter{conjecture}{0}
\setcounter{proposition}{0}

In order to show Theorem \ref{th2}, we need the following results.
\begin{lemma}\label{1kh22}
Let $p\geq7$ be a prime. Then we have
$$
\sum_{k=1}^{p-1}\f{H(2,2;k)}{k}\eq-\f{1}{2}B_{p-5}\pmod{p}.
$$
\end{lemma}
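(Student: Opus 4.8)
The plan is to unfold the inner double sum and interchange the order of summation, reducing the target to two ordinary multiple harmonic sums of weight $5$ over the full range. Writing $H(2,2;k)=\sum_{1\le i<j\le k}i^{-2}j^{-2}$ and summing $\tfrac1k$ against it, I would collect the triple sum $\sum_{1\le i<j\le k\le p-1}\tfrac{1}{i^2j^2k}$ and split it according to whether $j=k$ or $j<k$, which gives
$$
\sum_{k=1}^{p-1}\frac{H(2,2;k)}{k}=H(2,3;p-1)+H(2,2,1;p-1).
$$
Thus everything reduces to evaluating $H(2,3;p-1)$ and $H(2,2,1;p-1)$ modulo $p$.

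For the depth-two piece I would proceed exactly as in the proof of Lemma \ref{proph31}: replace $i^{-2}\equiv i^{p-3}$ and $j^{-3}\equiv j^{p-4}\pmod p$, use \eqref{berdef2} to write the inner power sum as $(B_{p-2}(j)-B_{p-2})/(p-2)$, expand through \eqref{berdef1}, and then apply $\sum_{j=1}^{p-1}j^{e}\equiv-1\pmod p$ when $(p-1)\mid e$ (with $e\ge1$) and $\equiv0$ otherwise. Only the term of total exponent $p-1$ survives, and since $B_{p-2}=0$ the whole computation collapses to a single binomial coefficient times $B_{p-5}$, yielding $H(2,3;p-1)\equiv-2B_{p-5}\pmod p$. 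The same device applied to $(a,b)=(4,1)$ gives the companion value $H(4,1;p-1)\equiv-B_{p-5}\pmod p$, which I will also need.

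The remaining and most delicate ingredient is the depth-three sum $H(2,2,1;p-1)$, which I would extract not by a direct triple Bernoulli computation but from one quasi-shuffle (stuffle) identity. Multiplying $H_{p-1}^{(2)}=H(2;p-1)$ by $H(2,1;p-1)$ and sorting the free index against the two nested indices produces
$$
H_{p-1}^{(2)}\,H(2,1;p-1)=2H(2,2,1;p-1)+H(2,1,2;p-1)+H(4,1;p-1)+H(2,3;p-1).
$$
Since $H_{p-1}^{(2)}\equiv0\pmod p$ while $H(2,1;p-1)$ is $p$-integral, the left-hand side vanishes modulo $p$. The term $H(2,1,2;p-1)$ is palindromic of odd weight $5$, so the reversal symmetry $H(s_1,s_2,s_3;p-1)\equiv(-1)^{5}H(s_3,s_2,s_1;p-1)\pmod p$ (obtained from the substitution $k\mapsto p-k$) forces $2H(2,1,2;p-1)\equiv0$, hence $H(2,1,2;p-1)\equiv0\pmod p$. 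Substituting this together with the depth-two values already found gives $2H(2,2,1;p-1)\equiv-H(4,1;p-1)-H(2,3;p-1)\equiv3B_{p-5}$, i.e.\ $H(2,2,1;p-1)\equiv\tfrac32B_{p-5}\pmod p$.

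Adding the two contributions then yields $\sum_{k=1}^{p-1}H(2,2;k)/k\equiv\tfrac32B_{p-5}-2B_{p-5}=-\tfrac12B_{p-5}\pmod p$, as claimed. The main obstacle is precisely the depth-three sum $H(2,2,1;p-1)$: the interchange of summation and the depth-two evaluation are routine, whereas pinning down $H(2,2,1;p-1)$ hinges on choosing the right stuffle relation and on the reversal vanishing of the palindromic $H(2,1,2;p-1)$. Alternatively, one could simply invoke the tabulated weight-$5$ evaluations of Tauraso and Zhao \cite{TZ}.
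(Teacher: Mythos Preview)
Your proof is correct, and it takes a different route from the paper. The paper instead swaps the $k$-sum inward (using $H_{p-1}\equiv0\pmod p$) to rewrite the target as $-\sum_{j=1}^{p-1}H_{j-1}H_{j-1}^{(2)}/j^2$, then applies the substitution $j\mapsto p-j$ together with the expansion $H_{j-1}H_{j-1}^{(2)}=H_jH_j^{(2)}-H_j^{(2)}/j-H_j/j^2+1/j^3$ to obtain
\[
\sum_{j=1}^{p-1}\frac{H_{j-1}H_{j-1}^{(2)}}{j^2}\equiv\tfrac12\bigl(-H_{p-1}^{(5)}-H(2,3;p-1)-H(1,4;p-1)\bigr)\pmod p,
\]
and finishes with the depth-two values $H(2,3;p-1)\equiv-2B_{p-5}$ and $H(1,4;p-1)\equiv B_{p-5}$ from \cite{TZ}. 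Your decomposition $\sum_k H(2,2;k)/k=H(2,3;p-1)+H(2,2,1;p-1)$ is more direct and stays entirely inside the multiple-harmonic-sum algebra; the stuffle identity combined with the palindromic vanishing of $H(2,1,2;p-1)$ is a clean way to recover $H(2,2,1;p-1)\equiv\tfrac32B_{p-5}$ without invoking \cite{Zhao}. The paper's route has the minor advantage of needing only depth-two values and never meeting a depth-three sum explicitly, at the cost of the auxiliary symmetrization step. Both arguments ultimately rest on the same weight-$5$ depth-two evaluations.
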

\begin{proof}
By Remark 2.1 we have
\begin{align*}
\sum_{k=1}^{p-1}\f{H(2,2;k)}{k}=\sum_{k=1}^{p-1}\f{1}{k}\sum_{1\leq i<j\leq k}\f{1}{i^2j^2}\eq-\sum_{1\leq i<j\leq p-1}\f{H_{j-1}}{i^2j^2}=-\sum_{j=1}^{p-1}\f{H_{j-1}H_{j-1}^{(2)}}{j^2}\pmod{p}.
\end{align*}
On one hand,
\begin{equation}\label{j-1j-1}
\sum_{j=1}^{p-1}\f{H_{j-1}H_{j-1}^{(2)}}{j^2}=\sum_{j=1}^{p-1}\f{H_jH_j^{(2)}-H_j^{(2)}/j-H_j/j^2+1/j^3}{j^2}.
\end{equation}
On the other hand, we have
$$
\sum_{j=1}^{p-1}\f{H_{j-1}H_{j-1}^{(2)}}{j^2}=\sum_{j=1}^{p-1}\f{H_{p-j-1}H_{p-j-1}^{(2)}}{(p-j)^2}\eq-\sum_{j=1}^{p-1}\f{H_{j}H_{j}^{(2)}}{j^2}\pmod{p}
$$
in view of that $H_{p-1-k}\eq H_k\pmod{p}$ and $H_{p-1-k}^{(2)}\eq-H_k^{(2)}\pmod{p}$. Combining this with \eqref{j-1j-1} we arrive at
\begin{align*}
\sum_{j=1}^{p-1}\f{H_{j-1}H_{j-1}^{(2)}}{j^2}\eq\f{1}{2}\l(-H_{p-1}^{(5)}-H(2,3;p-1)-H(1,4;p-1)\r)\pmod{p}.
\end{align*}
By \cite[Theorem 3.1]{TZ} we have $H(2,3;p-1)\eq-2B_{p-5}\pmod{p}$ and $H(1,4;p-1)\eq B_{p-5}\pmod{p}$ provided that $p\geq7$. These together with Lemma \ref{harmonic} imply that
$$
\sum_{j=1}^{p-1}\f{H_{j-1}H_{j-1}^{(2)}}{j^2}\eq\f{1}{2}B_{p-5}\pmod{p}.
$$
This proves the desired Lemma \ref{1kh22}.
\end{proof}

\begin{lemma}\label{h-k2k}
For any prime $p\geq7$ we have
\begin{equation}\label{lemmah-k2k}
\sum_{k=1}^{p-1}\f{H_k^{(2)}}{k}\eq\f{3H_{p-1}}{p^2}-\f{1}{2}p^2B_{p-5}\pmod{p^3}.
\end{equation}
\end{lemma}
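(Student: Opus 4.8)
The plan is to reduce the statement to a single ``antisymmetric'' weight-three multiple harmonic sum and then evaluate that sum one order beyond \eqref{key'}. Writing $H_k^{(2)}=\sum_{i\le k}1/i^2$ and interchanging the order of summation gives the two exact identities
$$\sum_{k=1}^{p-1}\frac{H_k^{(2)}}{k}=H(2,1;p-1)+H_{p-1}^{(3)},\qquad \sum_{k=1}^{p-1}\frac{H_k}{k^2}=H(1,2;p-1)+H_{p-1}^{(3)},$$
together with the stuffle relation $H_{p-1}H_{p-1}^{(2)}=H(1,2;p-1)+H(2,1;p-1)+H_{p-1}^{(3)}$. Since $p^2\mid H_{p-1}$ and $p\mid H_{p-1}^{(2)}$ (Wolstenholme), we have $H_{p-1}H_{p-1}^{(2)}\equiv0\pmod{p^3}$, so the symmetric combination $H(1,2;p-1)+H(2,1;p-1)$ is already pinned down exactly. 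Everything therefore comes down to the difference $\Delta:=H(2,1;p-1)-H(1,2;p-1)$ modulo $p^3$; indeed, combining the three relations above with $H_{p-1}^{(3)}\equiv-\tfrac65p^2B_{p-5}\pmod{p^3}$ from Lemma \ref{harmonic}, a short computation shows that the target \eqref{lemmah-k2k} is equivalent to $\Delta\equiv \frac{6H_{p-1}}{p^2}+\tfrac15p^2B_{p-5}\pmod{p^3}$.

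First I would record that the reflection $k\mapsto p-k$, which underlies the mod-$p^2$ result \eqref{key'}, only ever produces the symmetric combination $H(1,2;p-1)+H(2,1;p-1)$ (it reproduces the stuffle relation and is consistent with it), so reflection gives \emph{no} new information about $\Delta$. Moreover the leading term $6H_{p-1}/p^2$ of $\Delta$ is precisely \eqref{key'} read modulo $p^2$, so the only genuinely new quantity to be computed is the coefficient of $p^2B_{p-5}$. To extract it I would run the Fermat-quotient/Bernoulli-polynomial substitution used in Lemma \ref{proph31}, but carried to modulus $p^3$: replace each $1/i$ and $1/k$ by the appropriate power of $i$, resp. $k$, modulo $p^3$, convert the inner sums to the values $B_m(k)$ via \eqref{berdef1}--\eqref{berdef2}, and then sum in $k$ to produce convolution sums of Bernoulli numbers.

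These Bernoulli convolutions are exactly of the type $\sum B_aB_{p-3-a}$ (with auxiliary powers of $2$) treated in \cite[Section 6]{TZ} and in Lemma \ref{proplemma1}; using Kummer's congruence \eqref{kummer1} and those relations, all of them should collapse to a multiple of $B_{p-5}$. As concrete checkpoints along the way I would use the weight-four values $H(2,2;p-1)=\tfrac12\big((H_{p-1}^{(2)})^2-H_{p-1}^{(4)}\big)\equiv-\tfrac25pB_{p-5}\pmod{p^2}$ and $H(1,3;p-1)+H(3,1;p-1)\equiv-H_{p-1}^{(4)}\pmod{p^2}$ (from Lemma \ref{harmonic} and stuffle), and the weight-five values $H(2,3;p-1)\equiv-2B_{p-5}$, $H(1,4;p-1)\equiv B_{p-5}$ of \cite[Theorem 3.1]{TZ} (already invoked in Lemma \ref{1kh22}), with $H(3,2;p-1)\equiv2B_{p-5}$ following from $H(2,3;p-1)+H(3,2;p-1)\equiv0\pmod p$. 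Substituting these, together with the values of $H_{p-1}^{(3)}$ and $H_{p-1}^{(4)}$ and the main term from \eqref{key'}, should simplify to the claimed $\Delta\equiv 6H_{p-1}/p^2+\tfrac15p^2B_{p-5}$, hence to \eqref{lemmah-k2k}.

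The main obstacle is exactly this weight-three evaluation modulo $p^3$: because reflection is an involution it cannot isolate $\Delta$, so the entire $p^2B_{p-5}$ term has to be produced by the genuinely ``shuffle''-type Bernoulli computation rather than by any symmetry trick. The delicate points there are controlling the \emph{antisymmetric} weight-four contribution $H(1,3;p-1)-H(3,1;p-1)$ modulo $p^2$ (the stuffle relation fixes only the symmetric sum, so this again needs the Bernoulli method, now bottoming out at the weight-five values above) and managing the bookkeeping of the mixed Bernoulli convolutions so that every term reduces cleanly either to $B_{p-5}$ or to $H_{p-1}/p^2$. Getting all the rational coefficients to balance into the single clean constant $\tfrac15$ is where the real work lies.
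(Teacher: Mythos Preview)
Your reduction is correct: writing $\sum_{k=1}^{p-1}H_k^{(2)}/k=H(2,1;p-1)+H_{p-1}^{(3)}$, using the stuffle relation and $H_{p-1}H_{p-1}^{(2)}\equiv0\pmod{p^3}$, the lemma is indeed equivalent to
\[
\Delta:=H(2,1;p-1)-H(1,2;p-1)\equiv\frac{6H_{p-1}}{p^2}+\frac{1}{5}p^2B_{p-5}\pmod{p^3},
\]
and you are right that reflection $k\mapsto p-k$ cannot isolate $\Delta$.

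However, the paper takes a completely different and much shorter route that bypasses $\Delta$ altogether. It starts from the \emph{exact} identity (obtained by specialising \cite[Lemma~3.1]{LW} at $n=(p-1)/2$)
\[
\sum_{k=1}^{p-1}\frac{(-1)^k}{k}\binom{p-1}{k}\binom{p+k}{k}=-2H_{p-1},
\]
expands $\binom{p-1}{k}\binom{p+k}{k}=(-1)^k\prod_{j\le k}(1-p^2/j^2)\equiv(-1)^k\big(1-p^2H_k^{(2)}+p^4H(2,2;k)\big)\pmod{p^5}$, and reads off
\[
H_{p-1}-p^2\sum_{k=1}^{p-1}\frac{H_k^{(2)}}{k}+p^4\sum_{k=1}^{p-1}\frac{H(2,2;k)}{k}\equiv-2H_{p-1}\pmod{p^5}.
\]
The only remaining input is $\sum_{k=1}^{p-1}H(2,2;k)/k\equiv-\tfrac12B_{p-5}\pmod p$, which is Lemma~\ref{1kh22} and needs nothing beyond the weight-five values $H(2,3;p-1)$, $H(1,4;p-1)\pmod p$ from \cite[Theorem~3.1]{TZ}. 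Thus the binomial identity packages both the symmetric and antisymmetric weight-three information at once and pushes all residual work down to a depth-two weight-five sum modulo~$p$, which is routine.

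Your route, by contrast, asks for a depth-two weight-three sum modulo $p^3$. This is feasible but materially heavier than what you sketch: the substitution $1/i\equiv i^{p-2}$ is only valid modulo $p$, so reaching $p^3$-precision requires either iterating the Fermat lift (working with exponents of size $\sim 3p$) or first peeling off two $p$-adic layers by other means; and the resulting Bernoulli sums are not literally the $\sum B_aB_{p-3-a}$ convolutions of Lemma~\ref{proplemma1} (those control weight-four mod-$p$ phenomena), but a more tangled family whose reduction to a single $B_{p-5}$ is the whole difficulty. The paper's device simply sidesteps all of this.
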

\begin{proof}
Letting $n=(p-1)/2$ in \cite[Lemma 3.1]{LW} we obtain that
$$
\sum_{k=1}^{p-1}\f{(-1)^k}{k}\binom{p-1}{k}\binom{p+k}{k}=-2H_{p-1}.
$$
Note that
\begin{align*}
\binom{p-1}{k}\binom{p+k}{k}=&\prod_{j=1}^k\f{p^2-j^2}{j^2}=(-1)^k\prod_{j=1}^k\l(1-\f{p^2}{j^2}\r)\\
\eq&(-1)^k\l(1-p^2H_k^{(2)}+p^4H(2,2;k)\r)\pmod{p^5}.
\end{align*}
It follows that
$$
H_{p-1}-p^2\sum_{k=1}^{p-1}\f{H_k^{(2)}}{k}+p^4\sum_{k=1}^{p-1}\f{H(2,2;k)}{k}\eq-2H_{p-1}\pmod{p^5}.
$$
With the help of Lemma \ref{1kh22}, we obtain \eqref{lemmah-k2k}.
\end{proof}

\begin{lemma}\label{theorem2lemma1} For any prime $p\geq7$ we have
\begin{gather}
\label{h22k}\sum_{k=0}^{p-1}H(2,2;k)\eq-\f{p}{2}H_{p-1}^{(4)}-\f{3H_{p-1}}{p^2}+H_{p-1}^{(3)}+\f{1}{2}p^2B_{p-5}\pmod{p^3},\\
\label{h2442}\sum_{k=0}^{p-1}(H(2,4;k)+H(4,2;k))\eq3B_{p-5}\pmod{p},\\
\label{h222}\sum_{k=0}^{p-1}H(2,2,2;k)\eq-\f{3}{2}B_{p-3}\pmod{p}.
\end{gather}
\end{lemma}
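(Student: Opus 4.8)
All three congruences rest on the same device: interchanging the order of summation in $\sum_{k=0}^{p-1}H(\mathbf{s};k)$ to turn it into a single multiple harmonic sum running up to $p-1$. Since an inner summand whose top index equals $t$ occurs once for each $k$ with $t\ls k\ls p-1$, it acquires a weight $p-t$; thus each left-hand side becomes $p\,H(\mathbf{s};p-1)$ minus a multiple harmonic sum of weight one less. The plan is to carry out this reduction and then evaluate the resulting sums $H(\cdot;p-1)$ using Lemma \ref{harmonic}, Lemma \ref{1kh22}, Lemma \ref{h-k2k} and the tables of Tauraso and Zhao \cite{TZ}.

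For \eqref{h22k} I would write $\sum_{k=0}^{p-1}H(2,2;k)=pH(2,2;p-1)-H(2,1;p-1)$. The stuffle relation $\l(H_{p-1}^{(2)}\r)^2=2H(2,2;p-1)+H_{p-1}^{(4)}$ gives $H(2,2;p-1)=\f12\l(\l(H_{p-1}^{(2)}\r)^2-H_{p-1}^{(4)}\r)$; since $H_{p-1}^{(2)}\eq0\pmod p$ by Lemma \ref{harmonic}, the square is $O(p^2)$, so $\f p2\l(H_{p-1}^{(2)}\r)^2\eq0\pmod{p^3}$ and hence $pH(2,2;p-1)\eq-\f p2H_{p-1}^{(4)}\pmod{p^3}$. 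For the second term, splitting the double sum $\sum_{k=1}^{p-1}H_k^{(2)}/k$ according to whether the inner index is $<k$ or $=k$ yields the exact identity $\sum_{k=1}^{p-1}H_k^{(2)}/k=H(2,1;p-1)+H_{p-1}^{(3)}$, and Lemma \ref{h-k2k} then gives $H(2,1;p-1)\eq 3H_{p-1}/p^2-\f12p^2B_{p-5}-H_{p-1}^{(3)}\pmod{p^3}$. Subtracting reproduces the right-hand side of \eqref{h22k} term by term.

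For \eqref{h2442} and \eqref{h222} only a congruence modulo $p$ is needed, so the $p\,H(\cdot;p-1)$ contributions drop out and each sum collapses to a weight-$5$ multiple harmonic sum: $\sum_{k=0}^{p-1}(H(2,4;k)+H(4,2;k))\eq-\l(H(2,3;p-1)+H(4,1;p-1)\r)$ and $\sum_{k=0}^{p-1}H(2,2,2;k)\eq-H(2,2,1;p-1)\pmod p$. I would close the first with $H(2,3;p-1)\eq-2B_{p-5}$ and $H(1,4;p-1)\eq B_{p-5}$ from \cite{TZ}, together with the stuffle reflection $H(4,1;p-1)+H(1,4;p-1)+H_{p-1}^{(5)}=H_{p-1}^{(4)}H_{p-1}\eq0\pmod p$, which forces $H(4,1;p-1)\eq-B_{p-5}$ and hence the value $3B_{p-5}$. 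For the second I would use the identity $\sum_{k=1}^{p-1}H(2,2;k)/k=H(2,2,1;p-1)+H(2,3;p-1)$ (again peeling off the diagonal) together with Lemma \ref{1kh22}, giving $H(2,2,1;p-1)\eq\f32B_{p-5}\pmod p$ and hence \eqref{h222}.

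The genuinely delicate point is \eqref{h22k}: unlike the mod-$p$ congruences \eqref{h2442} and \eqref{h222}, it must be tracked to modulus $p^3$, so $H(2,1;p-1)$ cannot be handled by a crude reflection but instead requires the sharp evaluation of $\sum_{k=1}^{p-1}H_k^{(2)}/k$ modulo $p^3$ furnished by Lemma \ref{h-k2k} (which itself rests on Lemma \ref{1kh22}). Keeping every auxiliary sum to the correct $p$-adic order, and verifying that the $\f p2\l(H_{p-1}^{(2)}\r)^2$ piece really does vanish modulo $p^3$, is where the care lies; once the interchange of summation and the stuffle/reflection identities are in place, the weight-$5$ congruences \eqref{h2442}--\eqref{h222} are routine.
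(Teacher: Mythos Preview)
Your argument is correct and follows essentially the same strategy as the paper: interchange the order of summation to obtain $\sum_k H(\mathbf s;k)=pH(\mathbf s;p-1)-H(s_1,\ldots,s_{d-1},s_d-1;p-1)$, then evaluate the resulting multiple harmonic sums. The only differences are cosmetic: for \eqref{h22k} the paper passes through $\sum_k H_k/k^2$ before reaching $\sum_k H_k^{(2)}/k$, whereas you use the identity $H(2,1;p-1)=\sum_k H_k^{(2)}/k-H_{p-1}^{(3)}$ directly; for \eqref{h2442} the paper quotes $H(4,1;p-1)\equiv-B_{p-5}$ straight from \cite{TZ} while you recover it by stuffle from $H(1,4;p-1)$; and for \eqref{h222} the paper cites \cite{Zhao} for $H(2,2,1;p-1)$ while you derive it from Lemma~\ref{1kh22} and $H(2,3;p-1)$. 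One remark: your derivation gives $H(2,2,1;p-1)\equiv\frac32 B_{p-5}$ and hence $\sum_k H(2,2,2;k)\equiv-\frac32 B_{p-5}\pmod p$, which is the value actually needed in the proof of Theorem~\ref{th2}; the $B_{p-3}$ appearing in the stated \eqref{h222} and in the paper's citation of \cite{Zhao} is evidently a typographical slip, since a weight-$5$ sum cannot produce $B_{p-3}$.
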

\begin{proof}
By Lemma \ref{harmonic} we arrive at
\begin{align*}
\sum_{k=0}^{p-1}H(2,2;k)=&\sum_{k=1}^{p-1}\sum_{1\leq i<j\leq k}\f{1}{i^2j^2}=\sum_{1\leq i<j\leq p-1}\f{p-j}{i^2j^2}\\
=&\f{p}{2}\l(\l(H_{p-1}^{(2)}\r)^2-H_{p-1}^{(4)}\r)-\sum_{1\leq i<j\leq p-1}\f{1}{i^2j}\\
\eq&-\f{p}{2}H_{p-1}^{(4)}+\sum_{k=1}^{p-1}\f{H_k}{k^2}\pmod{p^3}.
\end{align*}
Furthermore,
\begin{align*}
\sum_{k=1}^{p-1}\f{H_k}{k^2}=&\sum_{k=1}^{p-1}\f{1}{k^2}\sum_{j=1}^{k}\f{1}{j}=\sum_{j=1}^{p-1}\f{1}{j}\sum_{k=j}^{p-1}\f{1}{k^2}\\
\eq&-\sum_{j=1}^{p-1}\f{H_j^{(2)}}{j}+H_{p-1}^{(3)}\pmod{p^3}.
\end{align*}
From the above and with the help of Lemma \ref{h-k2k}, we obtain \eqref{h22k}.

Now we turn to prove \eqref{h2442}. It is easy to see that
\begin{align*}
\sum_{k=0}^{p-1}(H(2,4;k)+H(4,2;k))=&\sum_{1\leq i<j\leq p-1}\f{p-j}{i^2j^4}+\sum_{1\leq i<j\leq p-1}\f{p-j}{i^4j^2}\\
\eq&-H(2,3;p-1)-H(4,1;p-1)\pmod{p}.
\end{align*}
By \cite[Theorem 3.1]{TZ} we have $H(2,3;p-1)\eq-2B_{p-5}\pmod{p}$ and $H(4,1;p-1)\eq -B_{p-5}\pmod{p}$ for $p\geq7$. Then \eqref{h2442} follows at once.

Finally, we consider \eqref{h222}. Clearly,
\begin{align*}
\sum_{k=0}^{p-1}H(2,2,2;k)=\sum_{1\leq i_1<i_2<i_3\leq p-1}\f{p-i_3}{i_1^2i_2^2i_3^2}\eq -H(2,2,1;p-1)\pmod{p}.
\end{align*}
By \cite[Theorem 3.5]{Zhao}, we have
$$
H(2,2,1;p-1)\eq\f{3}{2}B_{p-3}\pmod{p}.
$$
The proof of Lemma \ref{theorem2lemma1} is now complete.
\end{proof}

\begin{lemma}\label{identity2} Let $k\in\N$. Then for $n\in\Z^+$ we have
$$
\sum_{m=0}^{n-1}(2m+1)^3\binom{m+k}{2k}^2=\f{(n-k)^2(2n^2-k-1)}{k+1}\binom{n+k}{2k}^2.
$$
\end{lemma}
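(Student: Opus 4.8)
The plan is to prove the identity by exhibiting an explicit telescoping, exactly as one would for the companion identity in Lemma \ref{identity1}. Set
$$
T(m):=\frac{(m-k)^2(2m^2-k-1)}{k+1}\binom{m+k}{2k}^2,
$$
so that the claimed right-hand side is precisely $T(n)$. Since $T(0)=-k^2\binom{k}{2k}^2=0$ for every $k\in\N$ (the binomial vanishes when $k\geq1$, and the prefactor vanishes when $k=0$), it suffices to establish the single-term telescoping relation
$$
(2m+1)^3\binom{m+k}{2k}^2=T(m+1)-T(m)\qquad(m\geq0);
$$
summing this over $m=0,1,\ldots,n-1$ then collapses the right-hand side to $T(n)-T(0)=T(n)$, which is the assertion.

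First I would dispose of the range $0\leq m\leq k-1$, where $\binom{m+k}{2k}=0$ forces the left-hand side to vanish. For $0\leq m\leq k-2$ both $\binom{m+k}{2k}$ and $\binom{m+1+k}{2k}$ vanish, so $T(m)=T(m+1)=0$ and the relation reads $0=0$. At the boundary value $m=k-1$ one checks directly that $T(k-1)=0$ (because $\binom{2k-1}{2k}=0$) and $T(k)=0$ (because of the factor $(m-k)^2$), so again $0=0$. This single boundary point is the only slightly delicate spot, precisely because the ratio manipulation used below would otherwise divide by the vanishing quantity $m+1-k$.

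For $m\geq k$ I would use the elementary ratio
$$
\binom{m+1+k}{2k}=\frac{m+1+k}{m+1-k}\binom{m+k}{2k},
$$
legitimate since $m+1-k\geq1$. Substituting this into $T(m+1)$ cancels the factor $(m+1-k)^2$, and after dividing the target relation by the common nonzero factor $\binom{m+k}{2k}^2/(k+1)$, the whole statement reduces to the polynomial identity
$$
(2(m+1)^2-k-1)(m+1+k)^2-(m-k)^2(2m^2-k-1)=(2m+1)^3(k+1).
$$
The two quartics on the left share the same $m^4$ and $m^2k^2$ contributions, their difference at top order being the $8m^3k$ that matches the leading part of $(2m+1)^3(k+1)$, so the identity is a routine comparison of coefficients, which I would verify by simply expanding both sides.

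This polynomial verification is the only genuine computation, and it is entirely mechanical; the main (and only minor) obstacle is the bookkeeping at the boundary case $m=k-1$, where the ratio argument degenerates but both $T(k-1)$ and $T(k)$ happen to vanish, rather than any real analytic difficulty.
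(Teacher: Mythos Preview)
Your proof is correct and is precisely the induction on $n$ that the paper invokes (the paper's entire proof reads ``It can be verified directly by induction on $n$''); the telescoping relation $T(m+1)-T(m)=(2m+1)^3\binom{m+k}{2k}^2$ is exactly the inductive step, and your handling of the boundary case $m=k-1$ is accurate. The polynomial identity you reduce to does indeed hold, so there is nothing to add.
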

\begin{proof}
It can be verified directly by induction on $n$.
\end{proof}

\noindent {\it Proof of Theorem \ref{th2}}. The case $p=5$ can be verified directly. Below we assume that $p\geq7$. By Lemma \ref{identity2} we have
\begin{align*}
\sum_{m=0}^{p-1}(2m+1)^3A_m=&\sum_{m=0}^{p-1}(2m+1)^3\sum_{k=0}^m\binom{m+k}{2k}^2\binom{2k}{k}^2=\sum_{k=0}^{p-1}\binom{2k}{k}^2\sum_{m=0}^{p-1}(2m+1)^3\binom{m+k}{2k}^2\\
=&\sum_{k=0}^{p-1}\binom{2k}{k}^2\f{(p-k)^2(2p^2-k-1)}{k+1}\binom{p+k}{2k}^2\\
=&p^2\sum_{k=0}^{p-1}\f{2p^2-k-1}{k+1}\binom{p-1}{k}^2\binom{p+k}{k}^2.
\end{align*}
Noting that
\begin{align*}
&\binom{p-1}{k}^2\binom{p+k}{k}^2=\prod_{j=1}^k\l(1-\f{p^2}{j^2}\r)^2\eq\prod_{j=1}^k\l(1-\f{2p^2}{j^2}+\f{p^4}{j^4}\r)\\
\eq&1-2p^2H_k^{(2)}+p^4H_k^{(4)}+4p^4H(2,2;k)-2p^6\l(H(2,4;k)+H(4,2;k)\r)\\
&-8p^6H(2,2,2;k)\pmod{p^7},
\end{align*}
we arrive at
\begin{align*}
\sum_{m=0}^{p-1}(2m+1)^3A_m\eq&2p^4\sum_{k=1}^{p-1}\f{1}{k+1}\l(1-2p^2H_k^{(2)}+p^4H_{k}^{(4)}+4p^4H(2,2;k)\r)\\
&-p^2\sum_{k=0}^{p-1}\bigg(1-2p^2H_k^{(2)}+p^4H_k^{(4)}+4p^4H(2,2;k)\\
&-2p^6\l(H(2,4;k)+H(4,2;k)\r)-8p^6H(2,2,2;k)\bigg)\pmod{p^9}.
\end{align*}
It is clear that
\begin{equation*}
\sum_{k=0}^{p-1}\f{1}{k+1}=H_{p-1}+\f{1}{p}.
\end{equation*}
With the help of Lemma \ref{h-k2k} we obtain that
\begin{equation*}
\begin{aligned}
\sum_{k=0}^{p-1}\f{H_k^{(2)}}{k+1}=&\sum_{k=1}^{p}\f{H_{k-1}^{(2)}}{k}=\sum_{k=1}^{p-1}\f{H_k^{(2)}}{k}-H_{p-1}^{(3)}+\f{H_{p-1}^{(2)}}{p}\\
\eq&\f{3}{p^2}H_{p-1}+\f{H_{p-1}^{(2)}}{p}-H_{p-1}^{(3)}-\f{1}{2}p^2B_{p-5}\pmod{p^3}.
\end{aligned}
\end{equation*}
Clearly,
$$
\sum_{k=0}^{p-1}\f{H_k^{(4)}}{k+1}=H(4,1;p-1)+\f{H_{p-1}^{(4)}}{p}\eq-B_{p-5}+\f{H_{p-1}^{(4)}}{p}\pmod{p}.
$$
Furthermore,
\begin{align*}
\sum_{k=0}^{p-1}\f{H(2,2;k)}{k+1}=&\f{1}{2}\sum_{k=1}^{p}\f{1}{k}\l(\l(H_{k-1}^{(2)}\r)^2-H_{k-1}^{(4)}\r)=\f{1}{2}\sum_{k=1}^{p}\f{1}{k}\l(\l(H_{k}^{(2)}\r)^2-H_{k}^{(4)}-\f{2H_k^{(2)}}{k^2}+\f{2}{k^4}\r)\\
=&\sum_{k=1}^{p-1}\f{H(2,2;k)}{k}-H(2,3;p-1)+\f{1}{2p}\l(\l(H_{p-1}^{(2)}\r)^2-H_{p-1}^{(4)}\r).
\end{align*}
Then by Lemmas \ref{harmonic} and \ref{1kh22} we arrive at
$$
\sum_{k=0}^{p-1}\f{H(2,2;k)}{k+1}\eq\f{3}{2}B_{p-5}-\f{1}{2p}H_{p-1}^{(4)}\pmod{p}.
$$
For $r=2,4$ we have
\begin{equation*}
\sum_{k=0}^{p-1}H_k^{(r)}=\sum_{k=1}^{p-1}\sum_{l=1}^k\f{1}{l^r}=\sum_{l=1}^{p-1}\f{p-l}{l^r}=pH_{p-1}^{(r)}-H_{p-1}^{(r-1)}.
\end{equation*}
Combining the above and in view of Lemma \ref{theorem2lemma1} we obtain
\begin{align*}
\sum_{m=0}^{p-1}(2m+1)^3A_m\eq&2p^4H_{p-1}+2p^3-12p^4H_{p-1}+4p^6H_{p-1}^{(3)}-4p^5H_{p-1}^{(2)}+2p^8B_{p-5}-2p^8B_{p-5}\\
&+2p^7H_{p-1}^{(4)}+12p^8B_{p-5}-4p^7H_{p-1}^{(4)}-p^3+2p^5H_{p-1}^{(2)}-2p^4H_{p-1}-p^7H_{p-1}^{(4)}\\
&+p^6H_{p-1}^{(3)}+2p^7H_{p-1}^{(4)}+12p^4H_{p-1}-2p^8B_{p-5}-4p^6H_{p-1}^{(3)}-6p^8B_{p-5}\\
=&p^3-2p^4H_{p-1}-2p^5H_{p-1}^{(2)}+p^6H_{p-1}^{(3)}-p^7H_{p-1}^{(4)}+4p^8B_{p-5}\pmod{p^9}.
\end{align*}
Then Theorem \ref{th2} follows from Lemmas \ref{cms} and \ref{harmonic}.\qed

\end{document}